\newtheorem{theorem}{Theorem}[section]
\newtheorem{lemma}{Lemma}[section]
\newtheorem{conjecture}{Conjecture}[section]
\newtheorem{question}{Question}[section]
\theoremstyle{definition}
\newtheorem{definition}{Definition}[section]
\begin{document}
\title{Connected Components of Complex Divisor Functions}
\author{Colin Defant}
\address{Princeton University \\ Fine Hall, 304 Washington Rd. \\ Princeton, NJ 08544}
\email{cdefant@princeton.edu}

\begin{abstract}
For any complex number $c$, define the divisor function $\sigma_c\colon\mathbb N\to\mathbb C$ by $\displaystyle\sigma_c(n)=\sum_{d\mid n}d^c$. Let $\overline{\sigma_c(\mathbb N)}$ denote the topological closure of the range of $\sigma_c$. Extending previous work of the current author and Sanna, we prove that $\overline{\sigma_c(\mathbb N)}$ has nonempty interior and has finitely many connected components if $\Re(c)\leq 0$ and $c\neq 0$. We end with some open problems.  
\end{abstract}

\maketitle

\bigskip

\noindent 2010 {\it Mathematics Subject Classification}: Primary 11B05; Secondary 11N64.  

\noindent \emph{Keywords: Divisor function; connected component; spiral; nonempty interior; perfect number; friendly number.}

\section{Introduction}
One of the most famous and perpetually mysterious mathematical contributions of the ancient Greeks is the notion of a \emph{perfect number}, a number that is equal to the sum of its proper divisors. Many early theorems in number theory spawned from attempts to understand perfect numbers. Although few modern mathematicians continue to attribute the same theological or mystical significance to perfect numbers that ancient people once did, these numbers remain a substantial inspiration for research in elementary number theory \cite{Bezuszka, Cohen, Defant2, Hoque, McCranie, Pollack1, Pollack2, Pomerance}.   

A positive integer $n$ is perfect if and only if $\sigma_1(n)/n=2$, where $\sigma_1(n)=\sum_{d\mid n}d$. More generally, for any complex number $c$, we define the divisor function $\sigma_c$ by \[\sigma_c(n)=\sum_{d\mid n}d^c.\] These functions have been studied frequently in the special cases in which $c\in\{-1,0,1\}$. The divisor functions $\sigma_c$ with $c\in\mathbb N$ have also received a fair amount of attention, especially because of their close connections with Eisenstein series \cite{Koblitz, Stein}. Ramanujan, in particular, gave several pleasing identities involving divisor functions \cite{Hardy, Ramanujan}.  

Around 100 A.D., Nicomachus stated that perfect numbers represent a type of ``harmony" between ``deficient" and ``abundant" numbers. A positive integer $n$ is \emph{deficient} if $\sigma_1(n)/n<2$ and is \emph{abundant} if $\sigma_1(n)/n>2$. 
A simple argument shows that $\sigma_1(n)/n=\sigma_{-1}(n)$, so the function $\sigma_{-1}$ is often called the ``abundancy index." The abundancy index is also used to define \emph{friendly} and \emph{solitary} numbers. Two distinct positive integers $m$ and $n$ are said to be friends with each other if $\sigma_{-1}(m)=\sigma_{-1}(n)$. An integer is friendly if it is friends with some other integer, and it is solitary otherwise. For example, any two perfect numbers are friends with each other. There are several extremely difficult problems surrounding the notions of friendly and solitary numbers. For example, it is not known whether $10$ is friendly or solitary \cite{Anderson, Pollack1, Pollack3, Ward}.

\begin{figure}[t]
  \centering
  \subfloat[]{\includegraphics[width=0.42\textwidth]{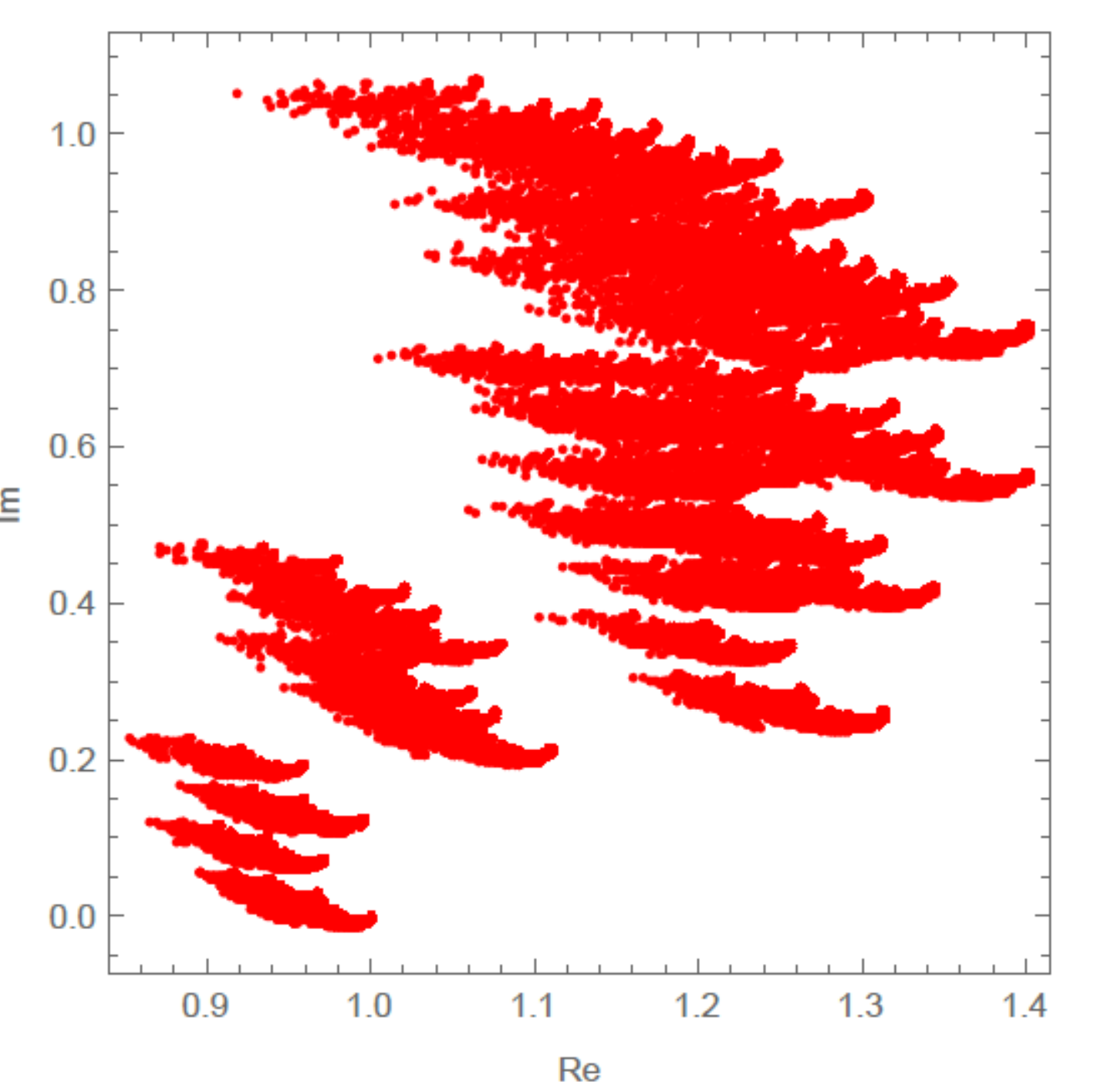}}
  \hfill
  \subfloat[]{\includegraphics[width=0.42\textwidth]{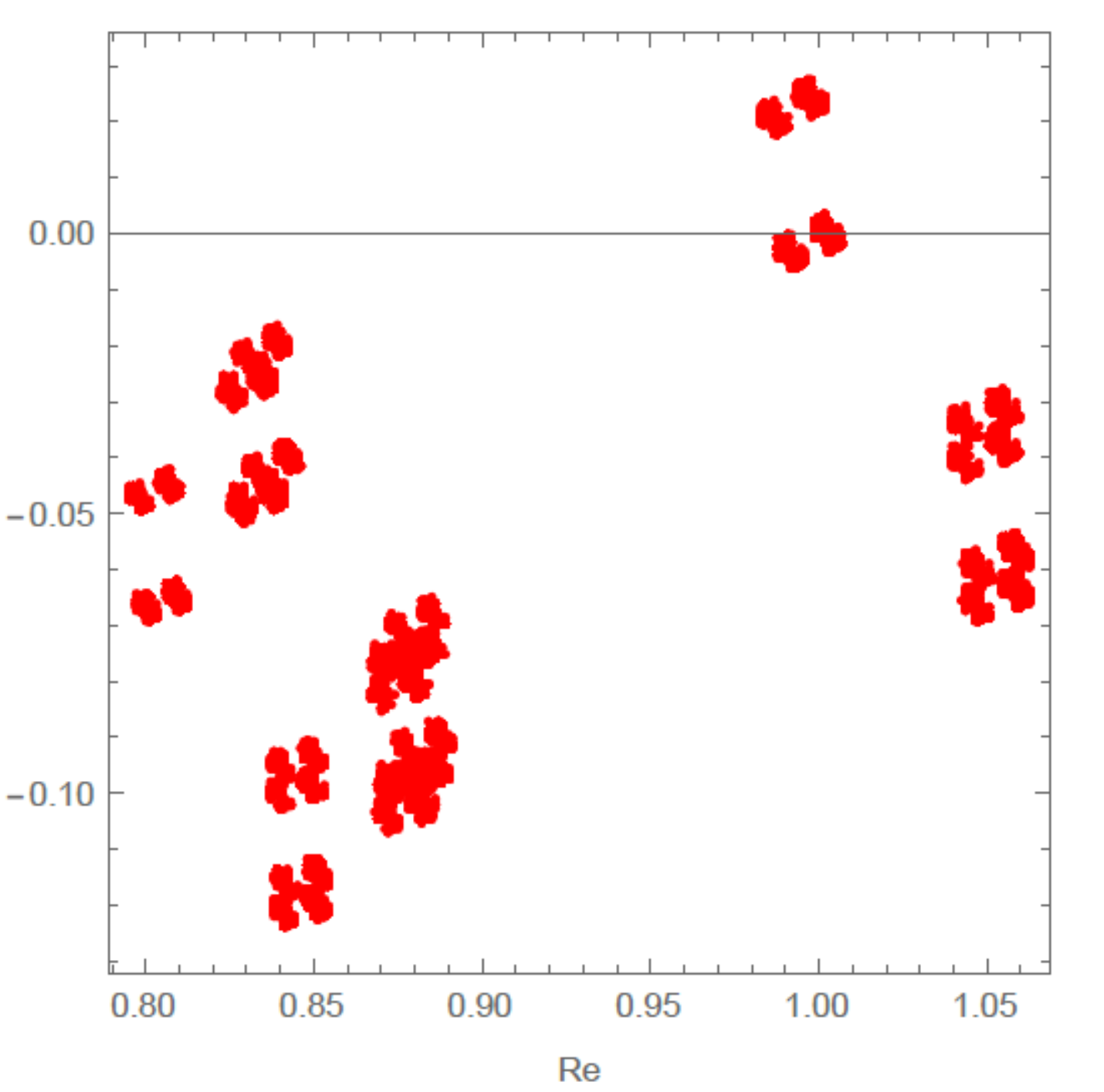}}
  \caption{Plots of $\sigma_c(n)$ for $1\leq n\leq 10^6$. The left image shows a plot when $c=-1.3+i$. The right image shows a plot when $c=-2.3+5i$.}\label{Fig2}
\end{figure}

Motivated by the difficult problems related to perfect and friendly numbers, Laatsch studied $\sigma_{-1}(\mathbb N)$, the range of $\sigma_{-1}$. He showed that $\sigma_{-1}(\mathbb N)$ is a dense subset of the interval $[1,\infty)$ and asked if $\sigma_{-1}(\mathbb N)$ is in fact equal to the set $\mathbb Q\cap[1,\infty)$ \cite{Laatsch}. Weiner answered this question in the negative, showing that $(\mathbb Q\cap[1,\infty))\setminus\sigma_{-1}(\mathbb N)$ is also dense in $[1,\infty)$ \cite{Weiner}. Recently, the author asked what could be said about the topological properties of the ranges of the divisor functions $\sigma_c$ for general complex numbers $c$. More specifically, he has studied sets of the form $\overline{\sigma_c(\mathbb N)}$, where the overline denotes the topological closure. The fractal-like appearances of these sets make them difficult to describe mathematically (see Figure \ref{Fig2}). Nevertheless, the simple number-theoretic definition of the function $\sigma_c$ lends itself to proofs of some nontrivial and often surprising facts concerning $\overline{\sigma_c(\mathbb N)}$. For example, the following theorems summarize some of the main results in this area. Theorem \ref{Thm1} is proven in \cite{Defant4} while the various parts of Theorem \ref{Thm2} are all proven in \cite{Defant1}. In what follows, let $\mathcal N(c)$ denote the (possibly infinite) number of connected components of $\overline{\sigma_c(\mathbb N)}$.  

\begin{theorem}[\cite{Defant4}]\label{Thm1}
Let $r\in\mathbb R$. There exists a constant $\eta\approx 1.8877909$ such that $\mathcal N(-r)=1$ (that is, $\overline{\sigma_{-r}(\mathbb N)}$ is connected) if and only if $r\in(0,\eta]$.  
\end{theorem}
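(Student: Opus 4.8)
The plan is to dispose of the cases $r\le 0$ quickly and then split the range $r>0$ into $0<r\le 1$ and $r>1$. If $r\le 0$, the smallest value taken by $\sigma_{-r}$ on integers greater than $1$ is $\sigma_{-r}(2)=1+2^{-r}>1$, so $1$ is an isolated point of $\overline{\sigma_{-r}(\mathbb N)}$ and $\mathcal N(-r)\ge 2$; thus $\mathcal N(-r)=1$ can only happen for $r>0$. For $0<r\le 1$ I would prove the stronger statement $\overline{\sigma_{-r}(\mathbb N)}=[1,\infty)$, which is connected. Since $\sigma_{-r}$ is multiplicative with $\sigma_{-r}(p)=1+p^{-r}$, it is enough to show that the set of finite products $\prod_{p\in S}(1+p^{-r})$, over finite sets $S$ of primes, is dense in $[1,\infty)$; taking logarithms, this reduces to the elementary fact that if $a_1,a_2,\dots>0$ with $a_n\to 0$ and $\sum_n a_n=\infty$, then the set of finite subset sums $\sum_{n\in S}a_n$ is dense in $[0,\infty)$, applied with $a_n=\log(1+p_n^{-r})$ and using $\sum_p p^{-r}=\infty$ (valid because $r\le 1$).

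The substance of the theorem is the case $r>1$, in which $\overline{\sigma_{-r}(\mathbb N)}$ is a compact subset of $[1,\zeta(r)]$. Writing $A_p=\{\sigma_{-r}(p^a):a\ge 0\}$ and $\overline{A_p}=A_p\cup\{(1-p^{-r})^{-1}\}$, I would first establish the structural description $\overline{\sigma_{-r}(\mathbb N)}=\bigl\{\prod_p y_p:y_p\in\overline{A_p}\text{ for all }p\bigr\}$. Indeed, the right-hand side is the image of the product $\prod_p\overline{A_p}$ --- compact by Tychonoff --- under $(y_p)\mapsto\prod_p y_p$, a map that is well defined and continuous because $\prod_p(1-p^{-r})^{-1}=\zeta(r)<\infty$; hence the right-hand side is compact, it clearly contains $\sigma_{-r}(\mathbb N)$, and it is contained in $\overline{\sigma_{-r}(\mathbb N)}$ because every $\prod_p y_p$ is approximated by $\sigma_{-r}\!\bigl(\prod_{p\le N}p^{a_p}\bigr)$ for appropriate exponents and large $N$. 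Being compact and containing both $1$ and $\zeta(r)$, this set is connected if and only if it equals all of $[1,\zeta(r)]$.

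Enumerate the primes as $p_1<p_2<\cdots$ and set $M_k=\prod_{j\ge k}(1-p_j^{-r})^{-1}$, so $M_1=\zeta(r)$. The key step is the dichotomy
\[
\overline{\sigma_{-r}(\mathbb N)}=[1,\zeta(r)]\iff M_{k+1}\ge 1+p_k^{-r}\ \text{for all }k\ge 1.
\]
Necessity is short: if $M_{k+1}<1+p_k^{-r}$, then any $n$ with $\sigma_{-r}(n)<1+p_k^{-r}$ is divisible by none of $p_1,\dots,p_k$ (a factor $\sigma_{-r}(p_j^a)$ with $a\ge 1$ would be $\ge 1+p_j^{-r}\ge 1+p_k^{-r}$), hence $\sigma_{-r}(n)\le M_{k+1}$; so the open interval $(M_{k+1},1+p_k^{-r})$ misses $\sigma_{-r}(\mathbb N)$ and therefore its closure, disconnecting $\overline{\sigma_{-r}(\mathbb N)}$. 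For sufficiency I would run a greedy construction: given $t\in[1,\zeta(r))$, put $t_1=t$ and, at step $k$, let $y_{p_k}$ be the largest element of $\overline{A_{p_k}}$ that does not exceed $t_k$ (it exists since $\overline{A_{p_k}}$ is compact), and set $t_{k+1}=t_k/y_{p_k}\ge 1$. An induction gives the invariant $1\le t_k\le M_k$: if $t_k\ge(1-p_k^{-r})^{-1}$ then $y_{p_k}=(1-p_k^{-r})^{-1}$ and $t_{k+1}=t_k(1-p_k^{-r})\le M_{k+1}$; if $t_k<(1-p_k^{-r})^{-1}$ then, since the ratios $\sigma_{-r}(p_k^{a+1})/\sigma_{-r}(p_k^a)$ decrease in $a$ and so are maximized by $1+p_k^{-r}$, we get $t_{k+1}<1+p_k^{-r}\le M_{k+1}$ by hypothesis. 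The sequence $(t_k)$ is nonincreasing and bounded below by $1$, so $t_k\to L\ge 1$; if $L>1$ then $(1-p_k^{-r})^{-1}<L\le t_k$ for all large $k$, which forces every later step into the first case and gives $L=t_{k_0}/M_{k_0}\le 1$ via the invariant, a contradiction. Hence $t_k\to 1$, so $\prod_p y_p=\lim_k t/t_{k+1}=t$ lies in the structural set; thus all of $[1,\zeta(r))$, and by closedness all of $[1,\zeta(r)]$, is attained.

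It remains to determine the set of $r>1$ for which the infinite system $\{M_{k+1}\ge 1+p_k^{-r}\}_{k\ge 1}$ holds; I would show it is $(1,\eta]$, where $\eta$ is the unique solution in $(1,\infty)$ of $\zeta(r)(1-2^{-r})(1-3^{-r})=1+3^{-r}$ --- equivalently $\prod_{p\ge 5}(1-p^{-r})^{-1}=1+3^{-r}$ --- and $\eta\approx 1.8877909$; together with the first paragraph this proves the theorem. This is the main obstacle. Writing $\psi_k(r)=M_{k+1}(r)/(1+p_k^{-r})$, so the $k$-th constraint is $\psi_k(r)\ge 1$, one must show that the governing constraint is the one with $k=2$: that $\psi_k(r)\ge 1$ on all of $(1,\eta]$ for every $k$, and that $\psi_2(r)<1$ on all of $(\eta,\infty)$. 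One has $\psi_k(r)\to\infty$ as $r\to1^+$ and $\psi_k(r)\to 1^-$ as $r\to\infty$; for all large $k$, crude bounds --- e.g.\ Bertrand's postulate already yields $\sum_{p>p_k}p^{-r}\gg p_k^{-r}$ uniformly for $r\in(1,2]$, while $\log M_{k+1}(r)\ge\sum_{p>p_k}p^{-r}$ and $\log(1+p_k^{-r})\le p_k^{-r}$ --- force $\psi_k>1$ with room to spare on $(1,2]$, so only finitely many small indices need checking. For those, an explicit evaluation at $r=\eta$ (using the defining identity to compute the tail products $M_{k+1}(\eta)$) shows $\psi_2(\eta)=1<\psi_k(\eta)$ for $k\ne 2$, and a derivative estimate establishing that $\psi_2$ has a single interior critical point then yields $\psi_2<1$ throughout $(\eta,\infty)$ and $\psi_k\ge 1$ on $(1,\eta]$ for all $k$. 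The remaining numerical confirmation that the root of the transcendental equation equals $1.8877909\ldots$ is routine.
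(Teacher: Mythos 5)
This theorem is stated in the paper with a citation to \cite{Defant4} and is not proved here, so there is no internal argument against which to compare. Judged on its own terms, your strategy is the natural one and the pieces you supply in full are sound: for $r\le 0$ the point $1$ is isolated; for $0<r\le 1$ the subset-sum lemma applied to $a_n=\log(1+p_n^{-r})$, together with the divergence of $\sum_p p^{-r}$, gives $\overline{\sigma_{-r}(\mathbb N)}=[1,\infty)$; and for $r>1$ the identification of $\overline{\sigma_{-r}(\mathbb N)}$ with $\bigl\{\prod_p y_p : y_p\in\overline{A_p}\bigr\}$, the gap criterion $M_{k+1}\ge 1+p_k^{-r}$, and the greedy verification of sufficiency (including the observation that $\sigma_{-r}(p^{a+1})/\sigma_{-r}(p^a)$ is maximized at $a=0$, and the invariant $1\le t_k\le M_k$) are all correct. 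The defining equation $\zeta(\eta)(1-2^{-\eta})(1-3^{-\eta})=1+3^{-\eta}$ that you propose is consistent with $\eta\approx 1.8877909$, and your identification of the $k=2$ constraint as the binding one checks out numerically.

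The genuine gap is the one you flag yourself: you have reduced the theorem to the assertion that the system $\{\psi_k(r)\ge 1\}_{k\ge 1}$ holds precisely on $(1,\eta]$, but you do not actually prove it. Two sub-claims are missing. First, you need $\psi_k(r)\ge 1$ for \emph{every} $r\in(1,\eta]$ and \emph{every} $k$, not just at $r=\eta$; evaluating $\psi_k(\eta)>1$ for $k\ne 2$ gives a single data point, and the ``Bertrand's postulate forces $\psi_k>1$ on $(1,2]$ for large $k$'' bound is only asserted, not proved (it also needs to be uniform enough to isolate a finite list of residual indices). Second, you need that $\psi_2(r)<1$ for \emph{all} $r>\eta$; a ``derivative estimate establishing a single interior critical point'' is the right kind of statement, but it is nontrivial for a function built from $\zeta(r)$ and must actually be carried out, along with the elementary but necessary verification that $\psi_2$ crosses $1$ from above. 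As written, the argument is a correct reduction plus a believable program for the final analytic step, rather than a complete proof; the remaining work is concentrated precisely where the constant $1.8877909$ comes from.
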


\begin{theorem}[\cite{Defant1}]\label{Thm2}
Let $c\in\mathbb C$. 
\begin{enumerate}
\item The set $\sigma_c(\mathbb N)$ is bounded if and only if $\Re(c)<-1$. 
\item If $\Re(c)>0$ or $c=0$, then $\sigma_c(\mathbb N)$ is discrete. If $\Re(c)\leq 0$ and $c\neq 0$, then $\sigma_c(\mathbb N)$ has no isolated points. 
\item The set $\sigma_c(\mathbb N)$ is dense in $\mathbb C$ if and only if $-1\leq \Re(c)\leq 0$ and $c\not\in\mathbb R$. 
\item We have $\mathcal N(c)\to\infty$ as $\Re(c)\to-\infty$. 
\item If $\Re(c)\leq -3.02$, then $\mathcal N(c)\geq 2$ (so $\overline{\sigma_c(\mathbb N)}$ is disconnected). 
\end{enumerate}
\end{theorem}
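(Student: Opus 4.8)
The plan is to establish the five assertions in an order that respects their dependencies: the elementary halves of (1) and (2) first, then the ``if'' direction of (3) — the analytic core — from which the remaining halves of (1) and (2) follow at once, and finally (4) and (5) via a clustering argument. The workhorse throughout is multiplicativity, $\sigma_c(n)=\prod_{p^a\parallel n}\sigma_c(p^a)$ with $\sigma_c(p^a)=\frac{p^{c(a+1)}-1}{p^c-1}$; write $\alpha=\Re(c)$. For (1): boundedness when $\alpha<-1$ is immediate from $|\sigma_c(n)|\le\sigma_\alpha(n)\le\zeta(-\alpha)$, while unboundedness when $\alpha\ge-1$ comes from $|\sigma_c(p)|\ge p^\alpha-1\to\infty$ (case $\alpha>0$), from $\sigma_c\big(\prod_{p\le x}p\big)\ge\exp\!\big(\tfrac12\sum_{p\le x}p^c\big)\to\infty$ together with the divergence of $\sum_p p^c$ (case $c\in[-1,0]$; trivially for $c=0$), and from part (3) (case $c\notin\mathbb R$). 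For the discreteness half of (2) one checks that $\{n:|\sigma_c(n)|\le M\}$ is finite for every $M$ — routine from $|\sigma_c(p^a)|\ge p^\alpha-1$ and $|\sigma_c(p^a)|\to\infty$ in $a$, with the finitely many primes satisfying $p^\alpha<2$ handled separately — so $\sigma_c(\mathbb N)$ is discrete; for the ``no isolated point'' half, when $\alpha<0$ we have $\sigma_c(np)=\sigma_c(n)(1+p^c)\to\sigma_c(n)$ as $p\to\infty$ over primes $p\nmid n$, and when $\alpha=0$, $c\ne0$ it follows from part (3), since a dense subset of $\mathbb C$ has no isolated point.

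Part (3) is the heart of the matter. For ``only if'', a set that is bounded ($\alpha<-1$, by (1)), or discrete ($\alpha>0$ or $c=0$, by (2)), or contained in $\mathbb R$ ($c\in\mathbb R$) cannot be dense in $\mathbb C$, which forces $-1\le\alpha\le0$ and $c\notin\mathbb R$. For ``if'', assume $-1\le\alpha\le0$ and $\beta:=\Im(c)\ne0$. Passing to logarithms via $\log\sigma_c(n)=\sum_{p^a\parallel n}\log\sigma_c(p^a)$, it suffices to prove that the set of finite sums assembled from the blocks $\log\sigma_c(p^a)$ ($p$ prime, $a\ge1$, at most one block per prime) is dense in $\mathbb C$; density of $\sigma_c(\mathbb N)$ follows on exponentiating. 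Two features make this plausible: first, the single-prime blocks $\log\sigma_c(p)=\log(1+p^c)=p^c+O(p^{2\alpha})$ make available a total displacement of order $\sum_p p^\alpha=\infty$ (Mertens, since $\alpha\ge-1$); second, the displacement directions cannot be confined to a half-plane, because for at least one $p_0\in\{2,3\}$ the number $\beta\log p_0$ is not a rational multiple of $2\pi$ (otherwise $\log 2/\log 3\in\mathbb Q$), so the spiral $\{\log\sigma_c(p_0^a):a\ge1\}$ — whose increments point in a dense set of directions by Weyl equidistribution — supplies arbitrarily fine adjustments in every direction. Feeding the unbounded displacement, the spread of directions, and the spirals into a Steinitz-type rearrangement/filling argument should yield the density of $\{\log\sigma_c(n):n\in\mathbb N\}$, hence of $\sigma_c(\mathbb N)$. \emph{This analytic--combinatorial step is the one I expect to be the main obstacle}; it is also where the endpoint $\alpha=-1$ and the modulus-one behavior at $\alpha=0$ — where the single-prime blocks no longer shrink and one must instead exploit the wide swing of $\arg(1+p^{it})$ — require separate care.

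Parts (4) and (5) both rest on a clustering principle: when $\alpha$ is very negative, $\sigma_c(n)=\prod_{p^a\parallel n}\sigma_c(p^a)$ differs by only a small multiplicative error (controlled by $\sum_{p>P}p^\alpha$ and by the high prime-power tails) from the truncated product $\prod_i\sigma_c\big(p_i^{\,\min(v_{p_i}(n),\,b_i)}\big)$ taken over the first few primes $p_i$, so all $n$ sharing a \emph{truncated valuation profile} $\big(\min(v_{p_i}(n),b_i)\big)_i$ fall into one small disk. For (4), given $m$ I would fix a scale $L$, set $b_i\approx L/\log p_i$ for the primes $p_i\le e^L$ (so that all gaps between distinct cluster centres have comparable size $\approx e^{L\alpha}$), note that the number of profiles exceeds $m$ once $L$ is large, and then choose $\alpha$ negative enough that the error terms are dominated by those gaps; the clusters then occupy more than $m$ pairwise-separated closed disks covering $\sigma_c(\mathbb N)$, so $\mathcal N(c)>m$, which gives $\mathcal N(c)\to\infty$. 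Part (5) is the case $m=2$ made quantitative: when $\Re(c)\le-3.02$ one shows that $\sigma_c(\mathbb N)$ misses an open annulus centred at $1$ — the values with $n$ odd stay in $\overline{D(1,\varepsilon_1)}$ with $\varepsilon_1=\sum_{m\ge3,\ m\text{ odd}}m^\alpha=\prod_{p\ge3}(1-p^\alpha)^{-1}-1$, while for even $n$ the factor $\sigma_c(2^{v_2(n)})$ displaces $\sigma_c(n)$ a distance roughly $2^\alpha$ from $1$ — so, since $1\in\overline{\sigma_c(\mathbb N)}$ lies inside the annulus and $\sigma_c(2)=1+2^c$ lies outside, $\overline{\sigma_c(\mathbb N)}$ is disconnected; pushing the threshold down to $-3.02$ requires bookkeeping the error terms carefully and using that $\sigma_c(2^v)$ lies on a logarithmic spiral with rigid angular structure, which sharpens the crude estimates.
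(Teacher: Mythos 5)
First, note that this paper does not prove Theorem \ref{Thm2} at all: it is quoted from \cite{Defant1}, so your proposal has to be judged on its own. Parts (1), (2) (with $c=0$ handled trivially, since $\sigma_0=d$ takes positive integer values and your finiteness criterion fails for it), and the ``only if'' half of (3) are fine as routine sketches. Your clustering scheme for (4) can be made to work, but the pivotal claim that ``all gaps between distinct cluster centres have comparable size $\approx e^{L\alpha}$'' is not correct as stated and is exactly where the argument needs to be anchored: the right observation is that two distinct truncated profiles already differ at some divisor $D\leq e^{L}$ (an integer), while every term contributing to a cluster's radius comes from an integer divisor strictly greater than $e^{L}$; hence, with $L$ fixed and $\alpha\to-\infty$, the ratio of the minimal gap to the radius grows like $(D/E)^{\alpha}$ with $D<E$ integers, and gaps eventually dominate radii. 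Phrased that way, (4) goes through.

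The genuine gap is the ``if'' direction of (3), which is the analytic core and which you yourself flag as unproven. Listing the ingredients (divergence of $\sum_p p^{\alpha}$ for $\alpha\geq-1$, equidistribution of $b\log p$ modulo $2\pi$, so that the directions of $p^{c}$ are dense) and then invoking ``a Steinitz-type rearrangement argument'' is not a proof, and the standard rearrangement theorems do not apply here: you may use each prime at most once, you may only \emph{add} blocks (never subtract), and you cannot ``walk along'' the spiral $\{\log\sigma_c(p_0^{a})\colon a\geq 1\}$, since only one exponent of $p_0$ may appear in a given $n$. Worse, at the endpoint $\Re(c)=0$ those points do not converge and there are no small single-prime blocks at all ($|p^{it}|=1$, so $\log(1+p^{it})$ is never small), so the claim that this spiral supplies arbitrarily fine adjustments in every direction fails precisely where you would need it. A correct argument (as in \cite{Defant1}, and in the same spirit as the proof of Theorem \ref{Thm4} in Section 3) is a greedy directional construction: at each step one selects a fresh large prime $p$ whose increment $\log(1+p^{c})\approx p^{c}$ points within a prescribed angle of the remaining target, which is possible because the arguments of $p^{c}$ equidistribute over primes in dyadic ranges while $\sum_p p^{\alpha}$ diverges; this needs to be written out, with separate care at $\Re(c)=0$. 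Finally, for (5) your odd/even separation does prove disconnectedness for all sufficiently negative $\Re(c)$, but the crude triangle-inequality bookkeeping you describe does not reach the stated threshold: at $\alpha=-3.02$ one needs roughly $\bigl(2^{\alpha}-\sum_{k\geq2}2^{k\alpha}\bigr)(1-\varepsilon_1)-\varepsilon_1>\varepsilon_1$ with $\varepsilon_1=\sum_{m\geq3,\ m\ \mathrm{odd}}m^{\alpha}\approx 0.0504$, and the left side comes out just below the right side, so the constant $-3.02$ genuinely requires the sharper angular input you only gesture at.
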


The current author has proven analogues of Theorem \ref{Thm1} for other families of arithmetic functions that are relatives of the divisor functions (the so-called ``unitary divisor functions" and ``alternating divisor functions") \cite{Defant3,Defant6}. He has also proven analogues of this theorem in which one considers the images of particular subsets of $\mathbb N$ under $\sigma_{-r}$ \cite{Defant5}. 

Suppose $r>1$ is real. In \cite{Sanna}, Sanna provides an algorithm for computing $\overline{\sigma_{-r}(\mathbb N)}$ when $r$ is known with sufficient precision. As a consequence, he proves that $\mathcal N(-r)<\infty$. This serves as a nice complement to part (4) of Theorem \ref{Thm2}, which implies that $\mathcal N(-r)\to\infty$ as $r\to\infty$. Very recently, Zubrilina \cite{Nina} has obtained asymptotic estimates for $\mathcal N(-r)$, showing that \[\pi(r)+1\leq\mathcal N(-r)\leq\frac{1}{2}\exp\left[\frac 12\frac{r^{20/9}}{(\log r)^{29/9}}\left(1+\frac{\log\log r}{\log r-\log\log r}+O\left(\frac{1}{\log r}\right)\right)\right],\] where $\pi(r)$ is the number of primes less than or equal to $r$. She has also shown that there is no real number $r$ such that $\mathcal N(-r)=4$ (for more about this, see Section 4 below). 

Part (3) of Theorem \ref{Thm2} provides a new natural number-theoretic way of generating countable dense subsets of the plane. Furthermore, these sets come equipped with their own natural enumeration. For example, \[\sigma_{-0.5+i}(1),\sigma_{-0.5+i}(2),\sigma_{-0.5+i}(3),\ldots\] is an enumeration of a dense subset of $\mathbb C$ (possibly with repetitions). 

Part (4) of Theorem \ref{Thm2} is also noteworthy for the following reason. Let us write $c=a+bi$ for $a,b\in\mathbb R$. Imagine fixing $b$ and letting $a$ decrease without bound. The crude estimate \[|\sigma_c(n)-1|=\left|\sum_{1<d\mid n}d^c\right|\leq\sum_{1<d\mid n}|d^c|=\sum_{1<d\mid n}d^a=O(2^a),\] which holds uniformly in $n$, shows that the diameter of the set $\overline{\sigma_c(\mathbb N)}$ decays at least exponentially in $a$ as $a\to-\infty$. Part (4) of the theorem tells us that while $\overline{\sigma_c(\mathbb N)}$ is ``shrinking" in diameter, it continues to ``bud off" into more and more connected components. 

The preceding paragraph does not exclude the possibility that $\mathcal N(c)=\infty$ for some complex numbers $c$. Indeed, parts (1) and (2) of Theorem \ref{Thm2} tell us that $\mathcal N(c)=\infty$ when $\Re(c)>0$ or $c=0$. According to Theorem \ref{Thm1} and part (3) of Theorem \ref{Thm2}, $\mathcal N(c)=1$ if $-1\leq \Re(c)\leq 0$ and $c\neq 0$. This leads us to the central question of this article. 

\begin{question}\label{Quest1}
Suppose $\Re(c)<-1$. Does the set $\overline{\sigma_c(\mathbb N)}$ have finitely many connected components? 
\end{question} 

As mentioned above, Sanna answered this question in the affirmative in the case in which $c$ is real \cite{Sanna}. The primary purpose of this article is to prove the following theorem, fully answering Question \ref{Quest1}.   

\begin{theorem}\label{Thm3}
If $\Re(c)\leq 0$ and $c\neq 0$, then $\overline{\sigma_c(\mathbb N)}$ has finitely many connected components. 
\end{theorem}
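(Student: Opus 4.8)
The plan is to reduce at once to the case $\Re(c)<-1$. Indeed, if $-1\leq\Re(c)\leq 0$ and $c\neq 0$, then either $c$ is real, in which case $c=-r$ with $r\in(0,1]\subseteq(0,\eta]$ and Theorem \ref{Thm1} gives $\mathcal N(c)=1$, or $c\notin\mathbb R$, in which case part (3) of Theorem \ref{Thm2} gives $\overline{\sigma_c(\mathbb N)}=\mathbb C$. So assume from now on that $a:=\Re(c)<-1$; then $\sigma_c(\mathbb N)$ is bounded by part (1) of Theorem \ref{Thm2}, the series $\sum_p p^{a}$ converges, and $2^{a}<1/2$. (I will not assume $\Im(c)\neq 0$; the argument below works uniformly and in particular recovers the theorem of Sanna \cite{Sanna}.)

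Write $c=a+bi$ and fix a prime bound $y$. Factoring each $n$ as (its largest $y$-smooth divisor)$\times$(its $y$-rough part) and using multiplicativity of $\sigma_c$ together with the boundedness just noted, one obtains the Minkowski product decomposition $\overline{\sigma_c(\mathbb N)}=\widehat S_y\cdot T_y$, where $\widehat S_y$ is the closure of $\{\sigma_c(m):m\text{ is }y\text{-smooth}\}$ and $T_y=\big\{\prod_{p>y}x_p:x_p\in E_p\big\}$ is the closure of $\{\sigma_c(m):m\text{ is }y\text{-rough}\}$, with $E_p=\{\sigma_c(p^j):j\geq 0\}\cup\{(1-p^c)^{-1}\}$. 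It is routine to check that $\widehat S_y$ is a compact countable (hence totally disconnected) set, all of whose elements have modulus in a fixed compact subinterval $[m_0,\zeta(-a)]$ of $(0,\infty)$ independent of $y$, and that $T_y$ is compact, contains $1$, and is contained in the disk $\overline B\big(1,\epsilon(y)\big)$ with $\epsilon(y)\asymp L(y):=\sum_{p>y}p^{a}$. Morally, $\widehat S_y$ carries the component structure (it is a thin, totally disconnected set, but its scaled copies of $T_y$ will be shown to glue into finitely many clusters) while $T_y$ carries the thickness.

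The technical heart is the following quantitative version of the assertion that $\overline{\sigma_c(\mathbb N)}$ has nonempty interior: \emph{for all sufficiently large $y$, the set $T_y$ contains a disk $\overline B(1,\rho_y)$ with $\rho_y\asymp L(y)$, the implied constants depending only on $c$.} To prove this, restrict to squarefree $y$-rough numbers, so that each prime $p>y$ contributes a factor that is either $1$ or $1+p^c$, and pass to logarithms near $1$: it then suffices to show that the subset sums of $\{\log(1+p^c):p>y\}$ are dense in a disk about $0$ of radius $\asymp L(y)$. This follows from (i) a Steinitz-type lemma: a countable family of plane vectors, each of length much smaller than the family's total length $L$, whose lengths inside every angular sector of opening $\pi/2$ sum to at least $\kappa L$, has subset sums dense in the disk of radius $\asymp\kappa L$ about the origin; and (ii) the verification of its hypotheses, where $\arg(p^c)\equiv b\log p\pmod{2\pi}$, so that when $b\neq 0$ the sets $\{p:b\log p\in I+2\pi\mathbb Z\}$ form a geometrically self-similar family of intervals and the prime number theorem (applied to $\sum_{p\in[X,2X]}p^{a}$) yields $\sum_{p>y,\ \arg(p^c)\in I}p^{a}\gg_{|I|}L(y)$ \emph{uniformly in $y$}, while $\sup_{p>y}p^{a}=y^{a}=o(L(y))$. (When $b=0$ these vectors are positive reals and the subset sums fill an interval $[0,\asymp L(y)]$ instead.)

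Granting this, $T_y$ is connected for all large $y$: given $t=\prod_{p>y}x_p\in T_y$, let $t_{\leq P}=\prod_{y<p\leq P}x_p$; a further factorization of the $y$-rough numbers at the prime $P$ gives $T_y=\widehat S_{(y,P]}\cdot T_P$, and since $t_{\leq P}\in\widehat S_{(y,P]}$ the quantitative interior bound yields $t_{\leq P}\cdot\overline B(1,\rho_P)\subseteq T_y$ for every prime $P>y$; because $|t_{\leq P}-t_{\leq P'}|\lesssim P^{a}$ for consecutive primes $P<P'$ whereas $\rho_P\asymp L(P)\asymp P^{a+1}/\log P$ is far larger, consecutive disks in this chain overlap, so the chain is a connected subset of $T_y$ whose closure contains both $t$ and the fixed disk $\overline B(1,\rho_y)$. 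Finally, write $\overline{\sigma_c(\mathbb N)}=\bigcup_{z\in\widehat S_y}zT_y$ as a union of connected sets with $z\in zT_y$, and consider the graph on $\widehat S_y$ joining $z$ to $z'$ whenever $zT_y\cap z'T_y\neq\varnothing$; the number of connected components of $\overline{\sigma_c(\mathbb N)}$ is at most the number of components of this graph, and since $zT_y\supseteq\overline B(z,\rho_y m_0)$, any two points of $\widehat S_y$ at distance less than $2\rho_y m_0$ are joined, so compactness of $\widehat S_y$ forces the graph---hence $\overline{\sigma_c(\mathbb N)}$---to have finitely many components. I expect the main obstacle to be part (ii) above: obtaining the mass estimate with a constant \emph{uniform in $y$}, since it is exactly this uniformity that makes the interior disk comparable in diameter to $T_y$ itself and thereby forces the chain of disks in the connectedness step to close up.
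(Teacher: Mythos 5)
Your outer structure is genuinely different from the paper's and is mostly sound: the Minkowski decomposition $\overline{\sigma_c(\mathbb N)}=\widehat S_y\cdot T_y$ (valid because both factors are compact), the chaining argument for the connectedness of $T_y$, and the graph-on-$\widehat S_y$ compactness argument for finiteness are all correct in spirit and would work once the interior statement is in hand. (The paper instead works directly with $\overline{\sigma_c(SF_{>y})}$ and finishes via an induction on ``special'' cutoffs, but your graph argument is an acceptable substitute.) One small correction: when $b=0$ the set $\overline{\sigma_c(\mathbb N)}$ lies on the real line, so $T_y$ cannot contain a planar disk about $1$; the statement and the final gluing step must be phrased with intervals rather than disks in that case, so the argument is not quite uniform in $b$ as you claim, and you may as well invoke Sanna as the paper does. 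Also, your asserted radius $\rho_y\asymp L(y)$ is stronger than you need; the chain already closes with $\rho_P\geq 2P^a$ since $P^a\ll P^a\cdot|t_{\leq P}|\cdot 2$ with the displacement between consecutive partial products of size $\lesssim P^a$.

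The genuine gap is your item (i), the ``Steinitz-type lemma,'' which you state but do not prove and which in fact carries all the technical weight of the theorem. You anticipate the difficulty lying in (ii), the sector-mass estimate, but (ii) is a routine prime-number-theorem computation; the hard part is precisely (i). As literally stated the lemma is not true: the hypothesis that each vector has length $o(L)$ together with mass $\geq\kappa L$ in every $\pi/2$-sector does not force subset sums to be dense in a disk. For instance, one can rig vector lengths to be lacunary (geometrically separated) in every direction, in which case the achievable subset sums have Cantor-like gaps even though the sector-mass and smallness conditions hold. What you actually need is a multiscale version: for every truncation scale $r>0$, the vectors of length $<r$ must themselves carry sector mass comparable to their own total, and this must hold with constants uniform over scales. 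In the application this is satisfied, but supplying a correct statement and a proof of such a lemma is essentially the content of Section 2 of the paper (the spiral $\mathcal S_c$, the winding functional $\theta\circ w$, and Lemmas \ref{Lem3} and \ref{Lem4}) plus the bulk of the proof of Theorem \ref{Thm4}, where the spiral structure is exploited to greedily steer partial sums toward an arbitrary target while carefully quantifying the angular overshoot at each step. Until that lemma is made precise and proved, the proposal does not yet constitute a proof of Theorem \ref{Thm3}.
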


Along the way, we prove that the topological subspace $\overline{\sigma_c(\mathbb N)}$ of $\mathbb C$ has nonempty interior (this is equivalent to the claim that $\overline{\sigma_c(\mathbb N)}$ contains a disk of positive radius) when $\Re(c)<-1$ and $c\not\in\mathbb R$ (see Theorem \ref{Thm4} below). This seemingly innocuous claim, whose proof is apparently more elusive than one might expect, is key to proving Theorem \ref{Thm3}.

We already know by Sanna's result and part (3) of Theorem \ref{Thm2} that Theorem \ref{Thm3} is true when $c\in\mathbb R$ or $-1\leq\Re(c)\leq 0$. Indeed, if $-1\leq\Re(c)\leq 0$ and $c\not\in\mathbb R$, then $\overline{\sigma_c(\mathbb N)}=\mathbb C$ is connected! Therefore, we may assume that $c\in\mathbb C\setminus\mathbb R$ and $\Re(c)<-1$ in the next section. Since $\sigma_{\overline{c\raisebox{1.4mm}{}}}(n)=\overline{\sigma_c(n)}$ for all positive integers $n$, we assume without loss of generality that $\Im(c)>0$ (in this one instance, bars denote complex conjugation, not topological closure).   

The paper is organized as follows. In Section 2, we collect some technical yet useful information about the geometry of the spiral $\mathcal S_c=\{\log(1+t^c):t\geq T_c\}$ (for some fixed $T_c\geq 1$) and its close approximation $\widetilde{\mathcal S}_c=\{t^c:t\geq T_c\}$. The reason for doing this comes from the simple fact that $\log(\sigma_c(p))$ lies on $\mathcal S_c$ for all sufficiently large primes $p$. In Section 3, we prove that $\overline{\sigma_c(\mathbb N)}$ contains a disk of positive radius centered at $1$. The proof of Theorem \ref{Thm3} then boils down to a relatively painless argument showing that the connected components of $\overline{\sigma_c(\mathbb N)}$ contain disks whose radii are uniformly bounded away from $0$ (this proves the desired result since part (1) of Theorem \ref{Thm2} tells us that $\overline{\sigma_c(\mathbb N)}$ is bounded). In Section 4, we list some open problems and conjectures concerning the results from this paper and the results from Zubrilina's recent paper. 

\section{Spiral Geometry}

The key property of the divisor function $\sigma_c$ that we exploit is that it is the unique multiplicative arithmetic function that satisfies $\sigma_c(p^\alpha)=1+p^c+p^{2c}+\cdots+p^{\alpha c}$ for all primes $p$ and positive integers $\alpha$. In particular, $\sigma_c(p)=1+p^c$. This suggests that it is useful to understand the geometric properties of the spiral-shaped curve $\{1+t^c\colon t>0\}$. It will often be useful to convert multiplication to addition through logarithms, so we define the principal value logarithm by $\log(z)=\log|z|+\arg(z)\,i$. We use the convention that $\arg(z)$, the principal argument of $z$, lies in the interval $(-\pi,\pi]$. 

Throughout this section, let $c=a+bi$, where $a<-1$ and $b>0$. Since $\Re(1+t^c)>0$ when $t\geq 1$, we may safely define \[\ell_c(t)=\log(1+t^c)\] for all $t\geq 1$. Consider the curve traced out in the complex plane by $\ell_c(t)$ as $t$ increases from $1$ to $\infty$. For each $t\geq 1$, $\ell_c'(t)$ is a vector tangent to this curve at the point $\ell_c(t)$ (we associate each complex number $z$ with the vector $(\Re(z),\Im(z))\in\mathbb R^2$). Let $\lambda(t)=\arg\left(\ell_c'(t)/\ell_c(t)\right)$ be the angle between the vector $\ell_c(t)$ and the tangent vector $\ell_c'(t)$. If $\pi/2<\lambda(t_0)<\pi$, then the vector $\ell_c(t)$ is rotating counterclockwise and decreasing in magnitude when $t=t_0$. We wish to show that there exists $T_c\geq 1$ such that $\pi/2<\lambda(t)<\pi$ for all $t\geq T_c$. This will show that the curve traced out by $\ell_c(t)$ for $t\geq T_c$ is a true ``spiral" in the sense that $\ell_c(t)$ rotates counterclockwise and decreases in magnitude as $t$ increases. Once we know that $T_c$ exists, we let 
\begin{equation}
\mathcal S_c=\{\ell_c(t)\colon t\geq T_c\} 
\end{equation}
be the spiral traced out by $\ell_c(t)$. 

As $t\to\infty$, we have \[\frac{\ell_c'(t)}{\ell_c(t)}=\frac{ct^{c-1}}{(1+t^c)\log(1+t^c)}=\frac{ct^{c-1}}{(1+t^c)(t^c+O(t^{2c}))}=\frac{c}{t+O(t^{c+1})}=\frac{c}{t+o(1)}.\] This shows that 
\[\lambda(t)=\arg(c)-\arg(t+o(1))=\arg(c)+o(1)\] as $t\to\infty$. We have assumed that $\Re(c)<-1$ and $\Im(c)>0$, so $\pi/2<\arg(c)<\pi$. It follows that $\pi/2<\lambda(t)<\pi$ for all sufficiently large $t$, as desired. 

The true purpose of this section is to develop a small theory of the spiral $\mathcal S_c$ for use in the proof of Theorem \ref{Thm3}. 

Let $[z_1,z_2]$ denote the line segment in the complex plane with endpoints $z_1$ and $z_2$. In symbols, $[z_1,z_2]=\{xz_1+(1-x)z_2\colon 0\leq x\leq 1\}$. As usual, we might write $(z_1,z_2]$, $[z_1,z_2)$, or $(z_1,z_2)$ depending on which endpoints we wish to include in the set. For any nonzero complex number $z$, let $w(z)$ be the point in the set $\{\xi\in \mathcal S_c\colon \arg(\xi)=\arg(z),|\xi|<|z|\}$ with the largest absolute value. Geometrically, $w(z)$ is the point of intersection of the spiral $\mathcal S_c$ and the line segment $[0,z)$ that is closest to $z$. Viewing $w\colon\mathbb C\setminus\{0\}\to\mathcal S_c$ as a function, we define the iterates $w^j$ by $w^1(z)=w(z)$ and $w^{j+1}(z)=w(w^j(z))$ for $j\geq 1$. For $\xi\in\mathcal S_c$, let $\theta(\xi)$ be the angle subtended from the origin by the arc of the spiral $\mathcal S_c$ with endpoints $\ell_c(T_c)$ and $\xi$. For example, $\theta(\ell_c(T_c))=0$, and $\theta(w^j(z))=\theta(z)+2j\pi$ for every $z\in\mathcal S_c$ and $j\in\mathbb N$.  

In order to make use of the estimate $\ell_c(t)=t^c+O(t^{2c})$, we let \[\widetilde{\mathcal S}_c=\{t^c\colon t\geq T_c\}.\] For $z\neq 0$, let $\widetilde w(z)$ be the point in the set $\{\xi\in\widetilde{\mathcal S}_c\colon\arg(\xi)=\arg(z),|\xi|<|z|\}$ with the largest absolute value. Let $\widetilde w^1=\widetilde w$ and $\widetilde w^{j+1}=\widetilde w\circ\widetilde w^j$ for $j\geq 1$. Let $\widetilde\theta(\xi)$ be the angle subtended from the origin by the arc of the spiral $\widetilde{\mathcal S}_c$ with endpoints $T_c^c$ and $\xi$. For example, $\widetilde\theta(\widetilde w^j(z))=\widetilde\theta(z)+2j\pi$ for every $z\in\widetilde{\mathcal S}_c$ and $j\in\mathbb N$. It is worth noting that $\widetilde w(t^c)=(e^{2\pi/b}t)^c$ for all $t\geq T_c$. Let $\widetilde A_t=\{x^c\colon t\leq x<e^{2\pi/b}t\}$ be the arc of $\widetilde S_c$ that starts at $t^c$ and makes one full revolution about the origin. For any $z\neq 0$, let $\widetilde g_t(z)$ be the unique point on $A_t$ with the same argument as $z$.  

\begin{figure}[t]
\begin{center}
\includegraphics[height=10cm]{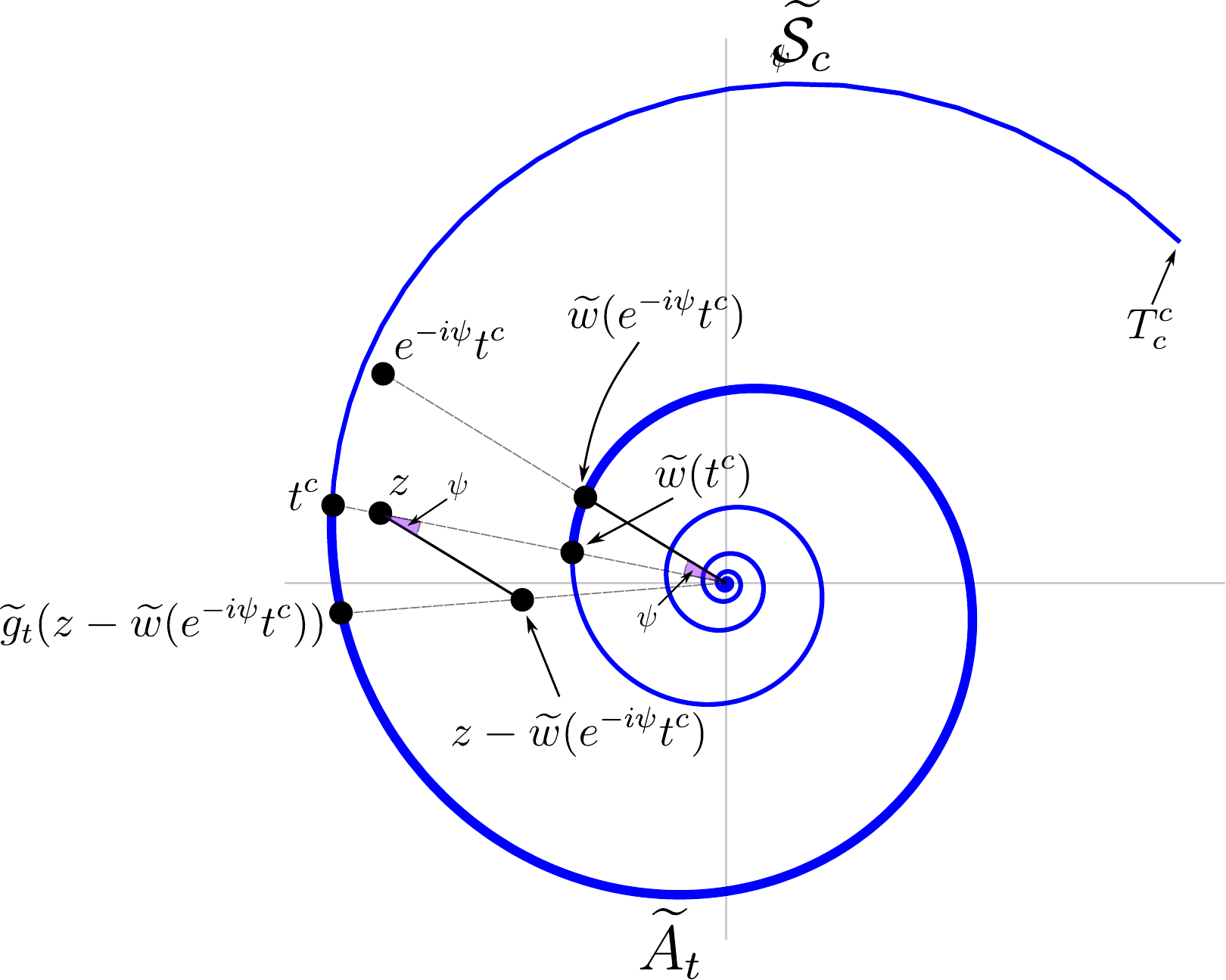}
\end{center}
\captionof{figure}{An illustration of the spiral $\widetilde{\mathcal S}_c$. Here, $\psi$ is an angle measurement satisfying $0\leq\psi\leq\psi_0$. We have chosen $t>T_c$ arbitrarily and selected $z\in(\widetilde w(t^c),t^c]$. The arc $\widetilde A_t$ is drawn in bold. Notice that, as claimed, $|z-\widetilde w(e^{-i\psi}t^c)|\leq|\widetilde g_t(z-\widetilde w(e^{-i\psi}t^c))|$.} \label{Fig1}
\end{figure}

If $0\leq \psi<\pi$ and $t>T_c$, then $\widetilde w(e^{-i\psi}t^c)=(e^{(2\pi-\psi)/b}t)^c$. Elementary geometric arguments show that there exists $\widetilde \psi_0>0$ such that \[|z-\widetilde w^j(e^{-i\psi}t^c)|\leq|\widetilde g_t(z-\widetilde w^j(e^{-i\psi}t^c))|\] whenever $j\in\mathbb N$, $0\leq\psi\leq\widetilde \psi_0$, $t>T_c$, and $z\in(\widetilde w(t^c),t^c]$. See Figure \ref{Fig1} for an illustration of this claim with $j=1$. It follows that if $j\in\mathbb N$, $0\leq\psi\leq\psi_0$, $t>T_c$, and $z\in(\widetilde w(t^c),t^c]$, then \[\widetilde\theta\left(\widetilde w\left(z-\widetilde w^j(e^{-i\psi}t^c)\right)\right)\geq\widetilde\theta(\widetilde w(z)).\] This leads us to the following lemma. 

\begin{lemma}\label{Lem3}
Suppose $c=a+bi$, where $a<-1$ and $b>0$. With notation as above, there exist $\tau_0\geq T_c$ and $\psi_0>0$ such that \[\theta\left(w\left(z-w^j(e^{-i\psi}\ell_c(t))\right)\right)\geq\theta(w(z))\] for all $j,\psi,t,z$ with $j\in\mathbb N$, $0\leq\psi\leq\psi_0$, $t\geq \tau_0$, and $z\in(w(\ell_c(t)),\ell_c(t)]$.  
\end{lemma}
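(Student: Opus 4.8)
The plan is to deduce the lemma from its already-established model counterpart for $\widetilde{\mathcal S}_c$ by exploiting that $\ell_c(t)=\log(1+t^c)=t^c+O(t^{2c})$, so that at the scale $\rho$ of one of its points the true spiral $\mathcal S_c$ differs from $\widetilde{\mathcal S}_c$ only by $O(\rho^2)$, whereas the spacing between consecutive windings is of order $\rho$; thus the relative discrepancy tends to $0$ as one travels outward, and this is exactly what the freedom to take $\tau_0$ large (and $\psi_0$ small) buys. The first step is to recast hypothesis and conclusion geometrically. Since $w^j$ preserves arguments, $v:=w^j(e^{-i\psi}\ell_c(t))$ has argument exactly $\arg\ell_c(t)-\psi$, and the whole segment $(w(\ell_c(t)),\ell_c(t)]$ lies on the single ray through $\ell_c(t)$; consequently $w(z)=w(\ell_c(t))$ for every admissible $z$, so the right-hand side $\theta(w(z))$ equals the constant $\theta(\ell_c(t))+2\pi$, and — writing $\zeta=z-v$ — the inequality to be proved is equivalent to the assertion that $w(\zeta)$ lies at least one full revolution inside $\ell_c(t)$ along $\mathcal S_c$. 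Letting $A_t$ and $g_t$ denote the evident $\mathcal S_c$-analogues of $\widetilde A_t$ and $\widetilde g_t$ (the first loop of $\mathcal S_c$ starting at $\ell_c(t)$, and the point of that loop with a prescribed argument), this is precisely the statement $|\zeta|\le|g_t(\zeta)|$, i.e.\ that $\zeta$ lies inside the loop $A_t$.

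So the whole content of the lemma is the geometric inequality $|\zeta|\le|g_t(\zeta)|$, the true-spiral version of $|z-\widetilde w^j(e^{-i\psi}t^c)|\le|\widetilde g_t(\,\cdot\,)|$. Using $\ell_c(t)=t^c+O(t^{2c})$ I would first record that, uniformly over the points that occur (those of modulus at most $|\ell_c(\tau_0)|$), the moduli $|w^j(e^{-i\psi}\ell_c(t))|$, $|z|$, $|g_t(\zeta)|$, and the endpoints of $A_t$, all agree with their $\widetilde{\mathcal S}_c$-counterparts up to relative error $O(t^a)$ — crucially with $j$-uniformity, since $w$ and $\widetilde w$ both contract moduli by roughly the fixed factor $e^{2\pi a/b}<1$, so errors propagate through the iterates via a convergent geometric series. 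Here one uses the exact identities $\arg\zeta-\arg\ell_c(t)=\arg(1-v/z)$ with $v/z$ of argument $-\psi$ and $|v|/|z|<e^{|a|\psi/b}\bigl(1+O(t^a)\bigr)$, which reduce the comparison to a one- or two-parameter inequality.

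The main obstacle — the reason the above reduction is not the end of the story — is that $|\zeta|\le|g_t(\zeta)|$ is only a non-strict inequality, and a sharpening of the elementary geometry underlying the model case shows the slack there is $\gtrsim|\ell_c(t)|\,e^{2\pi j a/b}$, which decays with $j$ and may be far smaller than the transfer error $O(|\ell_c(t)|^2)$. I would get around this with a dichotomy in $j$. If $j\le\tfrac{b}{2\pi}\log\tau_0-C$ for a large absolute constant $C$, the model slack is $\gtrsim e^{-2\pi C a/b}\,|\ell_c(t)|^2$, which dominates the transfer error, so the $\widetilde{\mathcal S}_c$-inequality carries over verbatim. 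If instead $j\ge j_0$ for a fixed $j_0=j_0(a,b)$, then $v$ has modulus small compared with $|z|$ (because $|z|>|w(\ell_c(t))|$ and $w$ contracts), and a direct first-order expansion of $|z-v|$ and of $|g_t(\zeta)|$ — valid since $\psi_0$ is taken small enough that, say, $\tan\psi_0<(b/|a|)\,e^{-2\pi|a|/b}$ — shows $|\zeta|<|g_t(\zeta)|$ with slack $\gtrsim|v|$ comfortably beating all error terms, for every admissible $z$. Taking $\tau_0$ large enough that $\tfrac{b}{2\pi}\log\tau_0-C\ge j_0$ makes these two ranges cover all of $\mathbb N$. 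I expect the only genuinely delicate points to be the bookkeeping of the $j$-uniform relative-error estimates and the explicit verification that the residual one-parameter inequality holds once $\psi_0$ is small; the rest is routine.
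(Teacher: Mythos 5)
Your proposal is correct and follows essentially the same route as the paper: the paper's own proof is just the instruction to combine the model-spiral inequality for $\widetilde{\mathcal S}_c$ with the estimate $\ell_c(t)=t^c+O(t^{2c})$, introducing $\tau_0$ (and a small $\psi_0$) so the approximation is useful, with all calculations omitted as ``straightforward yet mildly annoying.'' Your reduction of the conclusion to the loop inequality $|\zeta|\le|g_t(\zeta)|$ and your dichotomy in $j$ (transfer from $\widetilde{\mathcal S}_c$ for small $j$, direct first-order estimate for large $j$) are precisely the suppressed bookkeeping, and in fact make the $j$-uniformity issue more explicit than the published argument does.
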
 

\begin{proof}
The proof amounts to combining the discussion from the previous paragraph with the estimate $\ell_c(t)=t^c+O(t^{2c})$. The additional parameter $\tau_0$ is introduced in order to ensure that $t$ is large enough for this estimate to be useful. The necessary calculations are straightforward yet mildly annoying, so we omit the details. 
\end{proof}

Before proceeding to the proof of Theorem \ref{Thm3}, we gather one more technical lemma. 

\begin{lemma}\label{Lem4}
Let $c=a+bi$, where $a<-1$ and $b>0$. With notation as above, there exist $\chi_1,\chi_2$ such that $0<\chi_1\leq\chi_2<1$ and $\chi_1\leq|w(\ell_c(t))|/|\ell_c(t)|\leq\chi_2$ for all $t\geq T_c$. 
\end{lemma}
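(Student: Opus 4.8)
The plan is to reduce the estimate for the true spiral $\mathcal S_c$ to the corresponding—and essentially explicit—estimate for the model spiral $\widetilde{\mathcal S}_c$, using the approximation $\ell_c(t)=t^c+O(t^{2c})$. For the model spiral the ratio is exactly computable: since $\widetilde w(t^c)=(e^{2\pi/b}t)^c$, we have $|\widetilde w(t^c)|/|t^c|=|(e^{2\pi/b}t)^c|/|t^c|=e^{2\pi a/b}$, a constant strictly between $0$ and $1$ because $a<0<b$. So the model spiral satisfies the conclusion with $\chi_1=\chi_2=e^{2\pi a/b}$, and the content of the lemma is that passing from $\widetilde{\mathcal S}_c$ to $\mathcal S_c$ perturbs this ratio by a bounded amount, uniformly in $t\geq T_c$, without pushing it out of $(0,1)$.

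The key steps, in order, are as follows. First, I would record the basic geometry of $w$: for $z\neq 0$, the point $w(z)$ is obtained by walking along the ray from $0$ through $z$ and taking the outermost intersection with $\mathcal S_c$ strictly inside $|z|$; because $\mathcal S_c$ is a genuine spiral for $t\geq T_c$ (by the discussion preceding \eqref{Lem3}, the argument $\arg(\ell_c(t))$ is strictly monotonic and $|\ell_c(t)|$ is strictly decreasing once $t\geq T_c$), this intersection is unique and $w$ is well defined. Concretely, if $\ell_c(t)$ has argument $\phi$, then $w(\ell_c(t))=\ell_c(t')$ where $t'>t$ is the first parameter at which the argument has decreased by $2\pi$ (equivalently increased by $2\pi$ in the counterclockwise sense; I will fix the sign convention once and stick with it). Second, I would use $\ell_c(t)=t^c(1+O(t^c))$, hence $|\ell_c(t)|=t^a(1+O(t^a))$ and $\arg(\ell_c(t))=b\log t+O(t^a)$, to show that the parameter $t'$ solving $\arg(\ell_c(t'))=\arg(\ell_c(t))-2\pi$ satisfies $t'=e^{2\pi/b}t\,(1+O(t^a))$, with the implied constant uniform in $t\geq T_c$ (enlarging $T_c$ if necessary so the error terms are genuinely small). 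Third, feeding this back in gives
\[
\frac{|w(\ell_c(t))|}{|\ell_c(t)|}=\frac{(t')^a(1+O((t')^a))}{t^a(1+O(t^a))}=e^{2\pi a/b}\bigl(1+O(t^a)\bigr)=e^{2\pi a/b}+O(t^a),
\]
again uniformly in $t\geq T_c$. Fourth, since $t^a\to 0$ as $t\to\infty$, there is $T'\geq T_c$ beyond which this ratio lies in a small interval around $e^{2\pi a/b}$—in particular in a compact subinterval of $(0,1)$. Finally, on the remaining compact range $T_c\leq t\leq T'$ the function $t\mapsto |w(\ell_c(t))|/|\ell_c(t)|$ is continuous and takes values in $(0,1)$ (it is $<1$ because $w(\ell_c(t))$ lies strictly inside $|\ell_c(t)|$ by definition, and $>0$ because $w(\ell_c(t))\in\mathcal S_c$ is a nonzero point), so it attains a positive minimum and a maximum strictly less than $1$ there. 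Taking $\chi_1$ to be the smaller of the two lower bounds and $\chi_2$ the larger of the two upper bounds finishes the proof.

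The main obstacle I anticipate is purely bookkeeping rather than conceptual: making the passage from the asymptotic relation $\arg(\ell_c(t'))-\arg(\ell_c(t))=-2\pi$ to the estimate $t'=e^{2\pi/b}t(1+O(t^a))$ fully uniform in $t\geq T_c$. One must verify that the error term in $\arg(\ell_c(t))=b\log t+O(t^a)$ is not merely $o(1)$ but $O(t^a)$ with an absolute implied constant, and then invert the relation—this is where the choice of $T_c$ (already fixed in Section 2 to guarantee the spiral property) may need to be enlarged so that the implicit-function-type inversion is valid and the geometry of $w$ is unambiguous. These are exactly the "straightforward yet mildly annoying" calculations flagged in the proof of Lemma \ref{Lem3}, and I would handle them in the same spirit—stating the uniform estimate and omitting the elementary but tedious verification.
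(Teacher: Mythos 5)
Your proposal is correct and follows essentially the same route as the paper: both use the exact ratio $e^{2\pi a/b}$ for the model spiral $\widetilde{\mathcal S}_c$, the estimate $\ell_c(t)=t^c+O(t^{2c})$ to conclude $|w(\ell_c(t))|/|\ell_c(t)|\to e^{2\pi a/b}$ as $t\to\infty$, and continuity together with the pointwise bounds $0<|w(\ell_c(t))|/|\ell_c(t)|<1$ to control the remaining range of $t$. The paper phrases the last step as taking the infimum and supremum over all $t\geq T_c$ rather than splitting off a compact interval, but this is only a cosmetic difference.
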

\begin{proof}
The map $f\colon[T_c,\infty)\to\mathbb R$ given by $f(t)=|w(\ell_c(t))|/|\ell_c(t)|$ is continuous. Note that $|w(t^c)|/|t^c|=e^{2\pi a/b}\in(0,1)$ for all $t>0$. It follows from the estimate $\ell_c(t)=t^c+O(t^{2c})$ that $f(t)\to e^{2\pi a/b}$ as $t\to\infty$. Furthermore, $0<f(t)<1$ for all $t\geq T_c$. Setting $\chi_1=\inf_{t\geq T_c}f(t)$ and $\chi_2=\sup_{t\geq T_c}f(t)$, we find that $0<\chi_1\leq\chi_2<1$. 
\end{proof}

\section{The Proof of Theorem \ref{Thm3}}

In what follows, let $\mathbb N$, $\mathbb N_0$, $\mathcal P$, and $SF$ denote the set of positive integers, the set of nonnegative integers, the set of prime numbers, and the set of squarefree positive integers, respectively. Let $\mathbb N_{\leq y}$ and $\mathbb N_{>y}$ denote the set of positive integers whose prime factors are all at most $y$ (these are the so-called $y$-\emph{smooth} numbers) and the set of positive integers whose prime factors are all greater than $y$ (the $y$-\emph{rough} numbers). Put \[\mathcal P_{\leq y}=\mathcal P\cap\mathbb N_{\leq y},\hspace{.2cm}\mathcal P_{>y}=\mathcal P\cap\mathbb N_{>y},\hspace{.2cm}\mathcal SF_{\leq y}=\mathcal SF\cap\mathbb N_{\leq y},\hspace{.2cm}\text{and}\hspace{.2cm}\mathcal SF_{>y}=\mathcal SF\cap\mathbb N_{>y}.\] The notions of $y$-smooth and $y$-rough numbers are important for our discussion because, roughly speaking, the connected component of $\overline{\sigma_c(\mathbb N)}$ containing the point $\sigma_c(n)$ is determined by the small prime factors that divide $n$. Indeed, $\sigma_c(n)$ does not change by much if we multiply $n$ by a power of a very large prime. For $z\in\mathbb C$ and $r>0$, let $B_r(z)=\{\xi\in\mathbb C\colon|\xi-z|<r\}$ be the open disk of radius $r$ centered at $z$. 

We are going to apply Lemma \ref{Lem3} to show that $\overline{\sigma_c(\mathbb N)}$ has nonempty interior. In fact, we can prove the following stronger theorem. 

\begin{theorem}\label{Thm4}
Let $c=a+bi$, where $a<-1$ and $b\neq 0$. If $y$ is sufficiently large, then \[B_{2y^a}(1)\subseteq \overline{\sigma_c(\text{SF}_{>y})}.\]  
\end{theorem}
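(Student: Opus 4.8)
The plan is to build the disk $B_{2y^a}(1)$ inside $\overline{\sigma_c(\mathrm{SF}_{>y})}$ by working multiplicatively through logarithms. Since $\sigma_c$ is multiplicative and $\sigma_c(p)=1+p^c$, for a squarefree $y$-rough number $n=p_1\cdots p_k$ with $p_1<\cdots<p_k$ we have $\log\sigma_c(n)=\sum_{i=1}^k\ell_c(p_i)$, and each summand $\ell_c(p_i)$ lies on the spiral $\mathcal S_c$ once $p_i\geq\tau_0$. So the question becomes: which points of $\mathbb C$ near $0$ can be approximated by finite sums of points drawn from $\mathcal S_c$, subject to the constraint that the ``radii'' $|p_i|$ used are increasing and $y$-rough? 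Because primes are dense on a multiplicative scale (Bertrand/PNT: between $x$ and $(1+\epsilon)x$ there is always a prime for large $x$), the set $\{\ell_c(p):p\in\mathcal P_{>y}\}$ is, for large $y$, a fine net along the spiral $\mathcal S_c$ — fine enough that summing consecutive arcs simulates ``integrating along the spiral.'' The target region should be a full neighborhood of $0$ in the $\log$-picture (diameter comparable to $|\ell_c(\tau_0)|$, but we only need a small disk, and translating back via $\exp$ turns a disk around $0$ into a region around $1$ of radius $\asymp y^a$, which is where the $2y^a$ comes from after choosing $y$ large).

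The core construction is a greedy/spiral-descent argument powered by Lemma \ref{Lem3}. Fix a target $\zeta$ with $|\zeta|$ small. Starting from a prime $p_1$ with $\ell_c(p_1)$ roughly ``pointing toward'' $\zeta$, set $\zeta_1=\zeta-\ell_c(p_1)$; then choose $p_2>p_1$ with $\ell_c(p_2)$ lying on the arc of $\mathcal S_c$ that $w(\zeta_1)$ selects (i.e.\ $\ell_c(p_2)\approx w(\zeta_1)$, possible up to the net spacing since $p_2$ ranges over primes and the spiral arc per revolution is comparable in scale to its radius), set $\zeta_2=\zeta_1-\ell_c(p_2)$, and iterate. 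Lemma \ref{Lem3} is exactly the statement that this procedure does not stall: after subtracting an appropriate spiral point $w^j(e^{-i\psi}\ell_c(t))$ (the $e^{-i\psi}$ absorbs the error from snapping to an actual prime, and the $w^j$ the possibility that we must wrap around several times), the residual $\zeta_{k+1}$ has $\theta(w(\zeta_{k+1}))\geq\theta(w(\zeta_k))$, so the point of $\mathcal S_c$ we next tap lies strictly deeper in the spiral; combined with Lemma \ref{Lem4}, which gives a uniform geometric contraction $|w(\ell_c(t))|/|\ell_c(t)|\in[\chi_1,\chi_2]$ per revolution, the residuals $|\zeta_k|$ shrink geometrically. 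Hence the partial sums $\sum_{i\le k}\ell_c(p_i)$ converge to $\zeta$; since all $p_i$ are distinct primes $>y$, the corresponding integers $p_1\cdots p_k$ lie in $\mathrm{SF}_{>y}$, and $\sigma_c(p_1\cdots p_k)=\exp\!\big(\sum_{i\le k}\ell_c(p_i)\big)\to e^{\zeta}$. Letting $\zeta$ range over a small disk about $0$ and applying $\exp$ yields a disk about $1$; a routine check of the constants, using that $|\ell_c(p)|\asymp p^a$ for the first prime used and that $y$ is large, gives the radius $2y^a$.

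The main obstacle — and the place where the argument needs genuine care rather than bookkeeping — is the interplay between the \emph{discreteness of the primes} and the \emph{continuous spiral geometry}: at each step we want $\ell_c(p_{k+1})$ to equal a prescribed point $w(\zeta_k)$ on $\mathcal S_c$, but we can only land within the local prime gap, i.e.\ at $w(e^{-i\psi_k}\ell_c(t_k))$ for some small $\psi_k$ and some $t_k$ that is an actual prime. One must show (i) that $\psi_k$ can be kept below the threshold $\psi_0$ of Lemma \ref{Lem3} uniformly — this is where a prime-gap input like $p_{k+1}/p_k\to 1$ or a Bertrand-type bound is essential, and it forces the hypothesis ``$y$ sufficiently large'' so that all primes in play exceed $\tau_0$ and have small relative gaps — and (ii) that these per-step angular errors do not accumulate to derail convergence, which follows because Lemma \ref{Lem3} is stated robustly (with the $e^{-i\psi}$ and the iterate $w^j$ built in) precisely so that each corrected step still makes monotone progress in $\theta$. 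A secondary technical point is the very first step: one must argue that the initial residual $\zeta_1=\zeta-\ell_c(p_1)$ can be placed in the regime $z\in(w(\ell_c(t)),\ell_c(t)]$ required by the lemma, which amounts to choosing $p_1$ so that $\ell_c(p_1)$ overshoots $\zeta$ by less than one spiral revolution — again possible for large $y$ by the density of primes on the logarithmic scale. Once these two points are nailed down, the rest is the geometric-series estimate from Lemma \ref{Lem4} plus translating back through $\exp$.
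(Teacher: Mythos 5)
Your high-level reading of the strategy is accurate: work in the $\log$-picture via $\ell_c$, descend along the spiral by subtracting $\ell_c(p)$ for large primes $p$, invoke Lemma \ref{Lem3} to keep the residual from backsliding, and invoke Lemma \ref{Lem4} for the contraction per revolution. You also correctly identify the role of prime density (the $p_{m+1}/p_m\to 1$ input) and why ``$y$ sufficiently large'' is needed. But the central mechanism that makes the descent actually converge is missing, and it is precisely the part that cannot be waved away.

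Your greedy scheme subtracts \emph{one} prime per iteration and then claims the residuals ``shrink geometrically'' by Lemma \ref{Lem4}. This does not follow. Lemma \ref{Lem3} only asserts $\theta(w(\zeta_{k+1}))\geq\theta(w(\zeta_k))$ --- non-decrease, not a strict increase by a fixed amount --- and Lemma \ref{Lem4}'s ratio $\chi_2<1$ only pays off after a \emph{full revolution} of angular progress. So under your scheme the angles $\theta(w(\zeta_k))$ could increase by amounts summing to a finite total, never completing a revolution, and $|\zeta_k|$ would stay bounded away from $0$. The paper closes exactly this gap by replacing the one-prime step with a batch step: at stage $k$ it chooses a fixed number $M$ of primes $q_k^{(1)},\dots,q_k^{(M)}$ all lying in the short interval $[t_k,ut_k]$ determined by $w^j(z_k)$, subtracts them one at a time, and then proves a dichotomy: either the partial sums push $\theta(w(\cdot))$ past $\theta(w(z_k))+\psi_0/2$ within $M$ primes (case (A)), or they do not (case (B)) --- and case (B) is ruled out by a concrete linear-algebra estimate in the basis $\{z_k/|z_k|,\,i\,z_k/|z_k|\}$: each $\ell_c(q_k^{(n)})$ contributes at least $\chi_1^j|z_k|\cos(\psi_0/2)$ to the $\alpha_1$-coordinate, and $M$ was chosen (via $M>\chi_1^{-j}\sec(\psi_0/2)$) to make the total exceed $|z_k|$, forcing $\alpha_1(z_k^{(M)})<0$, i.e.\ the angle has in fact jumped by more than $\pi/2$. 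This is what guarantees a \emph{uniform} angular gain of $\psi_0/2$ per macro-step, hence $\theta(w(z_k))\to\infty$, hence $z_k\to 0$. Without this batch argument (or some substitute proving a uniform per-step angular gain), your proof of convergence has a genuine hole.
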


Before proceeding with the proof of Theorem \ref{Thm4}, let us see how to deduce Theorem \ref{Thm3} from it. We first need one more lemma. For $A,B\subseteq\mathbb C$ and $z\in\mathbb C$, let $AB=\{ab\colon a\in A,b\in B\}$ and $zA=\{za\colon a\in A\}$. 

\begin{lemma}\label{Lem2}
If $\Re(c)<-1$, then there exists $\delta_c>0$ such that $|\sigma_c(n)|\geq\delta_c$ for all $n\in\mathbb N$. 
\end{lemma}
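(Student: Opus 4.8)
The plan is to exploit the multiplicativity of $\sigma_c$ together with the rapid decay of $p^{\Re(c)}$, estimating each local factor $\sigma_c(p^\alpha)$ from below. Write $a=\Re(c)<-1$. If $p$ is prime and $\alpha\in\mathbb N$, then the reverse triangle inequality gives
\[|\sigma_c(p^\alpha)|=\left|1+\sum_{k=1}^{\alpha}p^{kc}\right|\geq 1-\sum_{k=1}^{\alpha}p^{ka}\geq 1-\sum_{k=1}^{\infty}p^{ka}=1-\frac{p^a}{1-p^a}.\]
Since $a<-1$, we have $p^a\leq 2^a<1/2$ for every prime $p$, so $\dfrac{p^a}{1-p^a}<1$; hence the quantity $c_p:=1-\dfrac{p^a}{1-p^a}$ lies in $(0,1)$ and $|\sigma_c(p^\alpha)|\geq c_p$ for all $\alpha\in\mathbb N$.

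Next I would check that the infinite product $\delta_c:=\prod_{p\in\mathcal P}c_p$ converges to a strictly positive number. Indeed, $1-c_p=\dfrac{p^a}{1-p^a}\leq\dfrac{1}{1-2^a}\,p^a$, and $\sum_{p\in\mathcal P}p^a<\infty$ because $a<-1$; thus $\sum_{p\in\mathcal P}(1-c_p)<\infty$, which is precisely the condition guaranteeing that the product of the numbers $c_p\in(0,1)$ is positive. Then for an arbitrary $n\in\mathbb N$ with prime factorization $n=p_1^{\alpha_1}\cdots p_r^{\alpha_r}$, multiplicativity of $\sigma_c$ yields
\[|\sigma_c(n)|=\prod_{i=1}^{r}|\sigma_c(p_i^{\alpha_i})|\geq\prod_{i=1}^{r}c_{p_i}\geq\prod_{p\in\mathcal P}c_p=\delta_c,\]
where the last inequality holds because each $c_p$ lies in $(0,1]$, so restricting to the primes dividing $n$ only enlarges the product (the case $n=1$ being the empty product, which equals $1\geq\delta_c$). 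Taking $\delta_c$ as above completes the argument.

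The only point requiring any care — and the place where the strict inequality $\Re(c)<-1$ is genuinely needed rather than merely $\Re(c)\le -1$ — is the simultaneous requirement that each factor $c_p$ be positive \emph{and} that $\sum_{p}(1-c_p)<\infty$, so that the product does not degenerate to $0$. Both facts follow immediately from $p^a\le 2^a<1/2$ and the convergence of $\sum_p p^a$, so there is no real obstacle here; the remainder is a routine application of multiplicativity and the triangle inequality.
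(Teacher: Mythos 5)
Your proof is correct and follows essentially the same route as the paper: both factor $|\sigma_c(n)|$ over prime powers via multiplicativity, bound each local factor below, and deduce positivity of the resulting Euler-type product from the convergence of $\sum_p p^a$ for $a<-1$. The only cosmetic difference is that you use the triangle inequality to get the explicit bound $c_p=1-\frac{p^a}{1-p^a}$, whereas the paper works with the infimum $\kappa(p)=\inf_\alpha|\sigma_c(p^\alpha)|$ and shows it is nonzero via the closed form for $\sigma_c(p^\alpha)$; this changes nothing essential.
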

\begin{proof}
Fix a complex number $c=a+bi$, where $a,b\in\mathbb R$ and $a<-1$. For each prime $p$, let $\kappa(p)=\inf\{|\sigma_c(p^\alpha)|\colon\alpha\in\mathbb N_0\}$. Let $\displaystyle\delta_c=\prod_{p\text{ prime}}\kappa(p)$. If $\nu_p(x)$ denotes the exponent of the prime $p$ in the prime factorization of a positive integer $x$, then $\displaystyle|\sigma_c(n)|=\prod_{p\text{ prime}}\left|\sigma_c\left(p^{\nu_p(n)}\right)\right|\geq\delta_c$ for every $n\in\mathbb N$. Therefore, it suffices to show that $\delta_c\neq 0$.   

For any prime $p$ and nonnegative integer $\alpha$, we have \[\sigma_c(p^\alpha)=\frac{p^{(\alpha+1)c}-1}{p^c-1}\neq 0\] because $|p^{(\alpha+1)c}|=p^{(\alpha+1)a}<1$. We also know that $\displaystyle\lim_{\alpha\to\infty}|\sigma_c(p^\alpha)|=\frac{1}{|1-p^c|}\neq 0$. Therefore, $\kappa(p)\neq 0$. We are left to show that the product defining $\delta_c$ does not diverge to $0$, which amounts to showing that the series $\displaystyle\sum_{p\text{ prime}}(\kappa(p)-1)$ converges. Since $a<-1$ and \[|\sigma_c(p^\alpha)|=|1+p^c+p^{2c}+\cdots+p^{\alpha c}|=1+O(p^a),\] we have \[\sum_{p\text{ prime}}(\kappa(p)-1)=O\left(\sum_{p\text{ prime}}p^a\right)<\infty,\] as desired.      
\end{proof}

\begin{proof}[Proof of Theorem \ref{Thm3} assuming Theorem \ref{Thm4}] 
Fix a nonzero complex number $c=a+bi$, where $a,b\in\mathbb R$ and $a\leq 0$. We wish to show that $\overline{\sigma_c(\mathbb N)}$ has finitely many connected components. This follows from part (3) of Theorem \ref{Thm2} if $a\geq -1$, so we may assume $a<-1$. Sanna has proven that $\overline{\sigma_c(\mathbb N)}$ has finitely many connected components if $a<-1$ and $b=0$ \cite{Sanna}. Thus, we may assume $b\neq 0$. 

Let $\delta_c$ be as in Lemma \ref{Lem2}. According to Theorem \ref{Thm4}, there exists $\widehat{y}\geq 1$ such that 
\begin{equation}\label{Eq11}
B_{2y^a}(1)\subseteq\overline{\sigma_c(SF_{>y})}\hspace{0.5cm}\text{for all}\hspace{0.5cm}y\geq \widehat{y}. 
\end{equation}
We may also assume that $\widehat{y}{\hspace{.03cm}}^a<1/3$. Let $n\in\mathbb N_{\leq \widehat{y}}$, and let $C$ be the connected component of $\overline{\sigma_c(\mathbb N)}$ such that $\sigma_c(n)\in C$. Every element of $SF_{>\widehat{y}}$ is relatively prime to $n$, so it follows from the multiplicativity of $\sigma_c$ that $\sigma_c(n)\sigma_c(SF_{>\widehat{y}})\subseteq\sigma_c(\mathbb N)$. Using \eqref{Eq11} and the definition of $\delta_c$, we find that \[\overline{\sigma_c(\mathbb N)}\supseteq\sigma_c(n)\overline{\sigma_c(SF_{>\widehat{y}})}\supseteq\sigma_c(n)B_{2\widehat{y}{\hspace{.02cm}}^a}(1)=B_{2\widehat{y}{\hspace{.02cm}}^a|\sigma_c(n)|}(\sigma_c(n))\supseteq B_{2\widehat{y}{\hspace{.02cm}}^a\delta_c}(\sigma_c(n)).\] This shows that every connected component of $\overline{\sigma_c(\mathbb N)}$ that contains an element of $\sigma_c(\mathbb N_{\leq \widehat{y}})$ must contain a disk of radius $2\widehat{y}{\hspace{.03cm}}^a\delta_c$. Part (1) of Theorem \ref{Thm2} tells us that $\overline{\sigma_c(\mathbb N)}$ is bounded, so there are only finitely many connected components of $\overline{\sigma_c(\mathbb N)}$  that contain elements of $\sigma_c(\mathbb N_{\leq \widehat{y}})$. Let $\mathcal C_1,\mathcal C_2,\ldots,\mathcal C_k$ be these connected components. We will show that $\overline{\sigma_c(\mathbb N)}$ has no other connected components. 

Say a real number $x\geq 1$ is \emph{special} if \[\sigma_c(\mathbb N_{\leq x})\subseteq\bigcup_{j=1}^k\mathcal C_j.\] Our goal is to show that every real number $x\geq 1$ is special. Suppose otherwise. The collection of special numbers is of the form $[1,p)$ for some prime $p$. The preceding paragraph tells us that $\widehat{y}$ is special, so $\widehat{y}<p$. There exists a positive integer $m$ such that $p$ is the largest prime factor of $m$ and $\sigma_c(m)\not\in\bigcup_{j=1}^k\mathcal C_j$. Write $m=m'p^\alpha$, where $\alpha$ and $m'$ are positive integers and $p\nmid m'$. Because $m'\in\mathbb N_{\leq p-1}$ and $p-1$ is special, there exists $r\in\{1,2,\ldots,k\}$ such that $\sigma_c(m')\in\mathcal C_r$. We will derive a contradiction by showing that $\sigma_c(m)\in\mathcal C_r$. 

Let $h=\max\{\widehat{y},p-1\}$. Every element of $SF_{>h}$ is relatively prime to $m'$, so it follows from \eqref{Eq11} and the multiplicativity of $\sigma_c$ that \[\overline{\sigma_c(\mathbb N)}\supseteq\sigma_c(m')\overline{\sigma_c(SF_{>h})}\supseteq\sigma_c(m')B_{2h^a}(1)=B_{2h^a|\sigma_c(m')|}(\sigma_c(m')).\] Since $B_{2h^a|\sigma_c(m')|}(\sigma_c(m'))$ is a connected subset of $\overline{\sigma_c(\mathbb N)}$ that contains $\sigma_c(m')$, we know that $B_{2h^a|\sigma_c(m')|}(\sigma_c(m'))\subseteq\mathcal C_r$. Thus, it suffices to show that $\sigma_c(m)\in B_{2h^a|\sigma_c(m')|}(\sigma_c(m'))$. 

Because $\widehat{y}<p$ and $\widehat{y}{\hspace{.03cm}}^a<1/3$, we have \[\left|\frac{p^{\alpha c}-1}{p^c-1}\right|\leq\frac{1+p^{\alpha a}}{1-p^a}\leq\frac{1+p^a}{1-p^a}<\frac{1+\widehat{y}{\hspace{.03cm}}^a}{1-\widehat{y}{\hspace{.03cm}}^a}<\frac{1+1/3}{1-1/3}=2.\] Therefore, \[\frac{|\sigma_c(m)-\sigma_c(m')|}{|\sigma_c(m')|}=|\sigma_c(p^\alpha)-1|=\left|p^c+p^{2c}+\cdots+p^{\alpha c}\right|=|p^c|\left|\frac{p^{\alpha c}-1}{p^c-1}\right|<2p^a.\] We know that $p^a<h^a$ because $p>h$. Consequently, \[|\sigma_c(m)-\sigma_c(m')|<2p^a|\sigma_c(m')|<2h^a|\sigma_c(m')|.\] In other words, $\sigma_c(m)\in B_{2h^a|\sigma_c(m')|}(\sigma_c(m'))$.    
\end{proof}

\begin{proof}[Proof of Theorem \ref{Thm4}]
By taking complex conjugates, we see that $\sigma_{a-bi}(SF_{>y})$ is obtained by reflecting $\sigma_{a+bi}(SF_{>y})$ through the real axis. This observation allows us to assume $b>0$ without loss of generality. Preserve the notation from Section 2. In particular, let $\tau_0$ and $\psi_0$ be as in Lemma \ref{Lem3}. We may assume $\psi_0<\pi/2$. For all sufficiently large $y$, we have $B_{2y^a}(1)\subseteq\exp(B_{3y^a}(0))$. Therefore, it suffices to show that $B_{3y^a}(0)\subseteq\overline{\log(\sigma_c(SF_{>y}))}$ for all sufficiently large $y$. 

The main ideas of the proof are as follows. After deciding how large we must choose $y$, we choose an appropriate positive integer $j$ and select $z_0\in B_{3y^a}(0)$. We then form a sequence $(z_k)_{k\geq 0}$ in the following manner. Assume we have defined $z_k$, and put $z_{k+1}=z_k-\sum_{n=1}^{N_k}\ell_c\left(q_k^{(n)}\right)$ for some distinct primes $q_k^{(n)}$ satisfying $0\leq \theta(w^j(z_k))-\theta\left(\ell_c\left(q_k^{(n)}\right)\right)<\psi_0/2$. With the help of Lemma \ref{Lem3}, we show that $\theta(w(z_k))\to\infty$ as $k\to\infty$. This then implies that $z_k\to 0$. We will show that it is possible to choose the primes of the form $q_k^{(n)}$ so that they are all distinct and at least $y$. It will then follow that $z_0-z_k\in\log(\sigma_c(SF_{>y}))$ for all $k$, completing the proof. We now present the details of this argument. The reader might find it helpful to refer to Figure \ref{Fig3} while trudging onward through the proof.      

Let $u=e^{\psi_0/(3b)}$. Using the fact that $\ell_c(t)=t^c+O(t^{2c})$, one can verify that $\arg(\ell_c(u\tau)/\ell_c(\tau))\to\psi_0/3$ as $\tau\to\infty$. Consequently, there exists $\tau_0'\geq 0$ such that 
\begin{equation}\label{Eq9}
\arg(\ell_c(u\tau)/\ell_c(\tau))<\psi_0/2\hspace{0.5cm}\text{for all}\hspace{0.5cm}\tau\geq \tau_0'.
\end{equation} 

Let $\chi_1$ and $\chi_2$ be as in Lemma \ref{Lem4}, and fix a positive integer $j$ such that 
\begin{equation}\label{Eq2}
\chi_2^{j-1}\leq u^a/4.
\end{equation} Fix an integer 
\begin{equation}\label{Eq3}
M>\chi_1^{-j}\sec(\psi_0/2).
\end{equation} It is well-known that $\frac{p_{m+1}}{p_m}\to 1$ as $m\to\infty$, where $p_m$ denotes the $m^{\text{th}}$ prime number. Therefore, there exists $\tau_0''>0$ such that $|\mathcal P\cap[\tau,u\tau]|\geq M$ for all $\tau\geq \tau_0''$. Since $\ell_c(t)=t^c+O(t^{2c})$, there exists $\tau_0'''$ such that 
\begin{equation}\label{Eq8}
\frac34|\tau^c|<|\ell_c(\tau)|\hspace{0.5cm}\text{for all}\hspace{0.5cm}\tau\geq \tau_0'''.
\end{equation} Choose $y$ large enough so that 
\begin{equation}\label{Eq1}
\xi\in B_{3y^a}(0)\Longrightarrow w(\xi)=w^3(\ell_c(x))\hspace{0.5cm}\text{for some}\hspace{0.5cm}x\geq\max\{\tau_0,\tau_0',\tau_0'',\tau_0'''\}.
\end{equation} We may also assume $y$ is large enough so that $\ell_c(y)\in B_{3y^a}(0)$. This forces $y\geq\max\{\tau_0,\tau_0',\tau_0'',\tau_0'''\}$. 

\begin{figure}[t]
\begin{center}
\includegraphics[height=12cm]{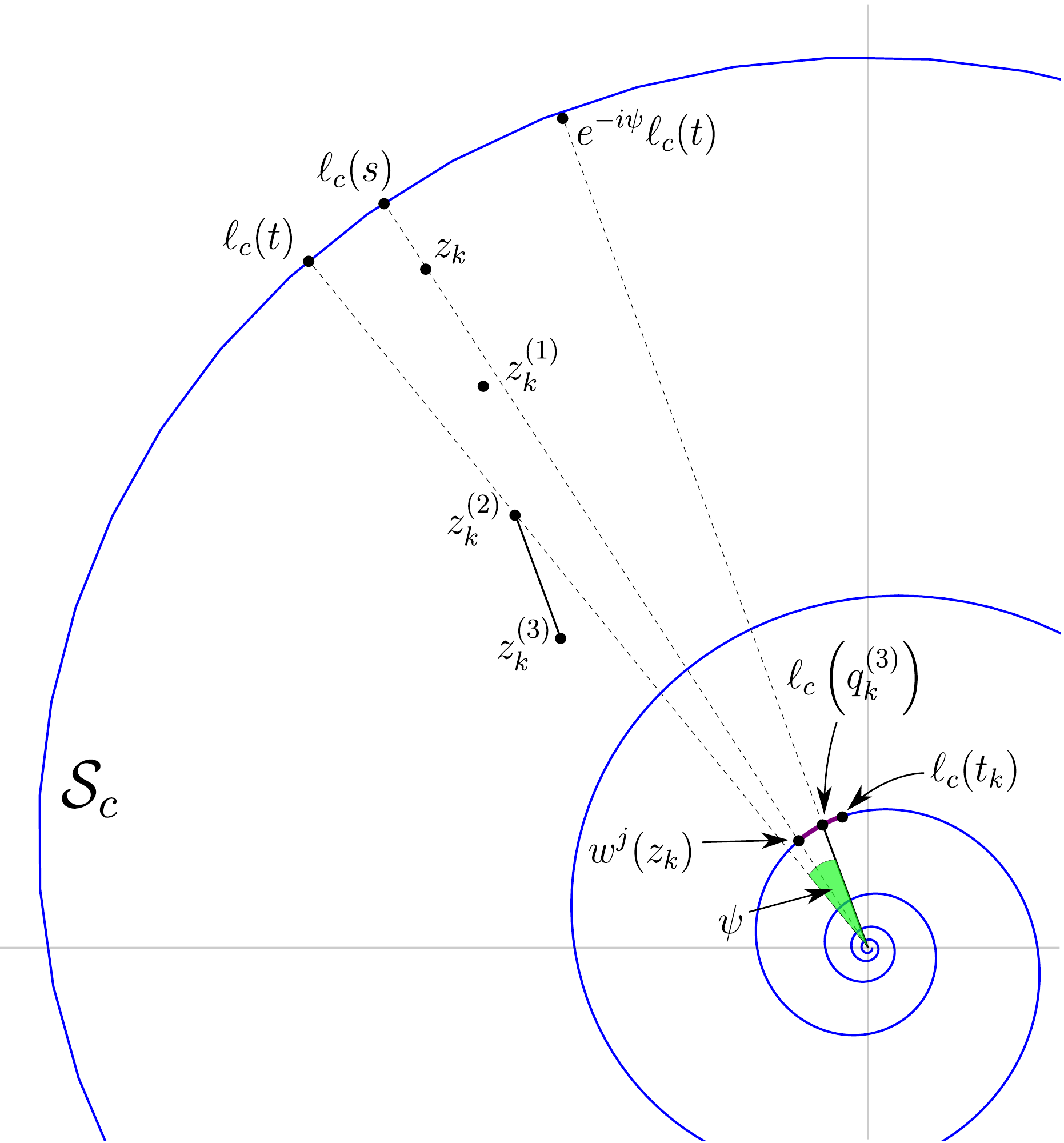}
\end{center}
\captionof{figure}{An illustration of part of the construction of $z_{k+1}$ from $z_k$. We have drawn $z_k^{(m)}$ for $m\in\{0,1,2,3\}$, although it is certainly possible that $M>3$. In this figure, we have taken $j=2$. We have also assumed $n=2$ in \eqref{Eq10}, which is why $\arg(\ell_c(t))=\arg\left(z_k^{(2)}\right)$. Notice that $z_k^{(3)}=z_k^{(2)}-\ell_c\left(q_k^{(3)}\right)$. The arc $\ell_c([t_k,ut_k])$ is colored purple. Observe that $\ell_c\left(q_k^{(3)}\right)$ lies on this purple arc since $q_k^{(3)}\in[t_k,ut_k]$. } \label{Fig3}
\end{figure}

Pick $z_0\in B_{3y^a}(0)$. Assume we have already defined $z_k\in B_{3y^a}(0)$. We define $z_{k+1}$ as follows. Let $t_k$ be the unique number such that $t_k\geq T_c$ and $\ell_c(ut_k)=w^j(z_k)$. According to \eqref{Eq2} and the definition of $\chi_2$, \[|w^j(z_k)|=|w^{j-1}(w(z_k))|\leq\chi_2^{j-1}|w(z_k)|<\chi_2^{j-1}|z_k|\leq u^a|z_k|/4.\] We have assumed that $z_k\in B_{3y^a}(0)$, so $|w^j(z_k)|<\frac 34u^ay^a$. Since $uy>y\geq \tau_0'''$, \eqref{Eq8} tells us that $\frac 34u^ay^a=\frac 34|(uy)^c|<|\ell_c(uy)|$. Hence, 
\[|\ell_c(ut_k)|=|w^j(z_k)|<|\ell_c(uy)|.\] This implies that $ut_k>uy$, so $t_k>y$. Since $t_k>y\geq \tau_0''$, it follows from the choice of $\tau_0''$ that there are distinct primes $q_k^{(1)},q_k^{(2)},\ldots,q_k^{(M)}\in[t_k,ut_k]$. For each $m\in\{0,1,\ldots,M\}$, let 
\begin{equation}\label{Eq7}
z_k^{(m)}=z_k-\sum_{n=1}^m\ell_c\left(q_k^{(n)}\right).
\end{equation} In particular, $z_k^{(0)}=z_k$. 

To ease notation, let $\phi_k^{(n)}=\theta\left(w\left(z_k^{(n)}\right)\right)$. Suppose 
\begin{equation}\label{Eq10}
0\leq \phi_k^{(n)}-\phi_k^{(0)}\leq\psi_0/2
\end{equation} for some $n\in\{0,1,\ldots,M-1\}$. Let $t$ be the unique real number such that $t\geq T_c$ and $z_k^{(n)}\in(w(\ell_c(t)),\ell_c(t)]$. Because $z_k\in B_{3y^a}(0)$, it follows from \eqref{Eq1} and \eqref{Eq10} that $t\geq \tau_0$. Let \[\psi=\arg\left(\ell_c(t)\Big/\ell_c\left(q_k^{(n)}\right)\right),\hspace{0.5cm}\psi'=\arg\left(w^j(z_k)\Big/\ell_c\left(q_k^{(n)}\right)\right),\hspace{0.5cm}\text{and}\] \[\psi''=\arg\left(\ell_c(t)/w^j(z_k)\right).\]
We know that $w^j(z_k)=\ell_c(ut_k)$, $t_k>y\geq \tau_0'$, and $q_k^{(n)}\in[t_k,ut_k]$. Therefore, \eqref{Eq9} tells us that $0\leq\psi'<\psi_0/2$. Since $\arg(w^j(z_k))=\arg(w(z_k))$ and $\arg(\ell_c(t))=\arg\left(w\left(z_k^{(n)}\right)\right)$, we have $\psi''=\phi_k^{(n)}-\phi_l^{(0)}$. According to \eqref{Eq10}, $0\leq\psi''\leq\psi_0/2$. Consequently, $0\leq\psi=\psi'+\psi''<\psi_0$. Observe that $\ell_c\left(q_k^{(n)}\right)=w^j(e^{-i\psi}\ell_c(t))$. Setting $z=z_k^{(n)}$ in Lemma \ref{Lem3}, we find that \[\phi_k^{(n+1)}=\theta\left(w\left(z_k^{(n)}-w^j(e^{-i\psi}\ell_c(t))\right)\right)\geq\phi_k^{(n)}.\]
This shows that one of the following must hold: 
\begin{enumerate}[(A)]
\item There exists $N_k\in\{1,2,\ldots,M\}$ such that \[\phi_k^{(0)}\leq\phi_k^{(1)}\leq\cdots\leq\phi_k^{(N_k-1)}\leq\phi_k^{(0)}+\psi_0/2<\phi_k^{(N_k)};\]
\item We have $\phi_k^{(0)}\leq\phi_k^{(M)}\leq\phi_k^{(0)}+\psi_0/2$. 
\end{enumerate}

We show that (B) cannot hold. If (B) is true, then $0\leq\arg\left(z_k^{(M)}/z_k\right)\leq\psi_0/2$. Thus, it suffices to show that 
\begin{equation}\label{Eq4}
\pi/2<\arg\left(z_k^{(M)}/z_k\right)\leq\pi\hspace{0.5cm}\text{or}\hspace{0.5cm}-\pi<\arg\left(z_k^{(M)}/z_k\right)\leq-\pi/2. 
\end{equation}
Let $\beta_1=z_k/|z_k|$ and $\beta_2=i\beta_1$. The set $\{\beta_1,\beta_2\}$ is a basis for $\mathbb C$ over $\mathbb R$. Therefore, for every $\xi\in\mathbb C$, there exist unique $\alpha_1(\xi),\alpha_2(\xi)\in\mathbb R$ such that $\xi=\alpha_1(\xi)\beta_1+\alpha_2(\xi)\beta_2$. In order to prove \eqref{Eq4}, we simply need to show that $\alpha_1\left(z_k^{(M)}\right)<0$. 

If $s$ is the unique real number such that $s\geq T_c$ and $w(\ell_c(s))=w(z_k)$ (in other words, $s$ is the unique real number such that $s\geq T_c$ and $z_k\in(w(\ell_c(s)),\ell_c(s)]$), then 
\begin{equation}\label{Eq5}\left|\ell_c\left(q_k^{(n)}\right)\right|\geq|w^j(z_k)|=|w^j(\ell_c(s))|\geq\chi_1^j|\ell_c(s)|\geq\chi_1^j|z_k|
\end{equation} for all $n\in\{1,2,\ldots,M\}$. We saw above (using \eqref{Eq9} and the fact that $q_k^{(n)}\in[t_k,ut_k]$) that \[0\leq \arg\left(w^j(z_k)\Big/\ell_c\left(q_k^{(n)}\right)\right)<\psi_0/2\] for all such $n$. Since $\arg(w^j(z_k))=\arg(z_k)$, this implies that
\begin{equation}\label{Eq6}\alpha_1\left(\ell_c\left(q_k^{(n)}\right)\right)\geq\left|\ell_c\left(q_k^{(n)}\right)\right|\cos(\psi_0/2)
\end{equation} 
for all $n\in\{1,2,\ldots,M\}$. Combining \eqref{Eq3}, \eqref{Eq5}, and \eqref{Eq6} yields \[\sum_{n=1}^M\alpha_1\left(\ell_c\left(q_k^{(n)}\right)\right)\geq\sum_{n=1}^M\chi_1^j|z_k|\cos(\psi_0/2)=\frac{M}{\chi_1^{-j}\sec(\psi_0/2)}|z_k|>|z_k|.\] Using \eqref{Eq7}, we find that \[\alpha_1\left(z_k^{(M)}\right)=\alpha_1(z_k)-\sum_{n=1}^M\alpha_1\left(\ell_c\left(q_k^{(n)}\right)\right)=|z_k|-\sum_{n=1}^M\alpha_1\left(\ell_c\left(q_k^{(n)}\right)\right)<0,\] as desired. 

We have demonstrated that (B) is false, so (A) must be true. Let $z_{k+1}=z_k^{(N_k)}$. It is straightforward to show that \[\left|z_k^{(0)}\right|\geq\left|z_k^{(1)}\right|\geq\cdots\geq\left|z_k^{(N_k)}\right|.\] In particular, $|z_{k+1}|\leq|z_k|<3y^a$. This recursively defines the sequence $(z_k)_{k\geq 0}$ of elements of $B_{3y^a}(0)$. According to (A), $\theta(w(z_{k+1}))>\theta(w(z_k))+\psi_0/2$. We have seen that \[\theta(w^j(z_k))-\psi_0/2<\theta\left(\ell_c\left(q_k^{(n)}\right)\right)\leq\theta(w^j(z_k))\] for all $k\geq 0$ and $n\in\{1,2,\ldots,M\}$. It follows that 
\[\theta\left(\ell_c\left(q_k^{(n)}\right)\right)\leq\theta(w^j(z_k))<\theta(w^j(z_{k+1}))-\psi_0/2<\theta\left(\ell_c\left(q_{k+1}^{(n')}\right)\right)\] for every $k\geq 0$ and all $n,n'\in\{1,2,\ldots,M\}$. Consequently, the primes $q_k^{(n)}$ for $k\geq 0$ and $1\leq n\leq M$ are all distinct. We also saw that $q_k^{(n)}\geq t_k>y$ for all $k$ and all $n$. This shows that \[z_0-z_m=\sum_{k=0}^{m-1}\sum_{n=1}^{N_k}\ell_c\left(q_k^{(n)}\right)=\log\left(\sigma_c\left(\prod_{k=0}^{m-1}\prod_{n=1}^{N_k}q_k^{(n)}\right)\right)\in\log(\sigma_c(SF_{>y}))\] for all $m\geq 1$. Because $\theta(w(z_{k+1}))>\theta(w(z_k))+\psi_0/2$ for all $k\geq 0$, we know that $\theta(w(z_m))\to\infty$ as $m\to\infty$. It follows that $\displaystyle\lim_{m\to\infty}z_m=0$. This completes the proof that $z_0\in\overline{\log(\sigma_c(SF_{>y}))}$. 
\end{proof}

We have made no attempt to optimize Theorem \ref{Thm4}. For instance, for any fixed $K>0$, it is possible to adjust the constants throughout the proof of Theorem \ref{Thm4} in order to show that $B_{Ky^a}(1)\subseteq\overline{\sigma_c(SF_{>y})}$ for all sufficiently large $y$. We have also made no attempt to determine exactly how large ``sufficiently large" is. This leads to the interesting problem, originally suggested by Andrew Kwon, of determining the largest $R$ such that $B_R(1)\subseteq\overline{\sigma_c(\mathbb N)}$. In any event, our original formulation of Theorem \ref{Thm4} is strong enough for the proof Theorem \ref{Thm3}. 

\section{Conclusion and Open Problems}

Suppose $\Re(c)\leq 0$ and $c\neq 0$. We have shown that $\overline{\sigma_c(\mathbb N)}$ has finitely many connected components, and we have shown that this set contains a disk of positive radius if $c\not\in\mathbb R$. Of course, this is a far leap from a thorough understanding of the topological and geometric properties of $\overline{\sigma_c(\mathbb N)}$. For example, we mentioned at the end of the previous section that Andrew Kwon has asked the following interesting question. 

\begin{question}\label{Quest2}
Suppose $c\in\mathbb C\setminus\mathbb R$ and $\Re(c)<-1$. What is the largest real number $R$ such that $B_R(1)\subseteq\overline{\sigma_c(\mathbb N)}$?
\end{question}

Now that we know $\overline{\sigma_c(\mathbb N)}$ has finitely many connected components, it is natural to ask just how many connected components it has! More precisely, we make the following definition. 

\begin{definition}
For each positive integer $m$, let $D_m$ denote the set of all complex numbers $c$ such that $\overline{\sigma_c(\mathbb N)}$ has exactly $m$ connected components. Let $E_m=D_m\cap\mathbb R$. 
\end{definition}

Theorem \ref{Thm1} tells us that $E_1=[-\eta,0)$. We also know from parts (2) and (5) of Theorem \ref{Thm2} that $-3.02<\Re(c)\leq 0 $ if $c\in D_1$ (the lower bound of $-3.02$ is most likely not optimal and could probably be improved without extreme difficulty). Part (3) of the same theorem tells us that $c\in D_1$ if $-1\leq\Re(c)\leq 0$ and $c\neq 0$. However, the following question is still open in general. 

\begin{question}\label{Quest3}
What is $D_1$? 
\end{question}

For a fixed $c$, it is easy for a computer to calculate $\sigma_c(n)$ for several values of $n$. It would be interesting to use techniques and computer programs from the rapidly-developing area of topological data analysis to gain empirical information about the set $D_1$. In particular, one could probably predict the general ``shape" of $D_1$.   

As mentioned in the introduction, Zubrilina has proven asymptotic estimates for $\mathcal N(-r)$, the number of connected components of $\overline{\sigma_{-r}(\mathbb N)}$, when $r>1$ is real \cite{Nina}. These estimates can be translated into information about the sets $E_m$. One could similarly attempt to prove asymptotic estimates for $\mathcal N(c)$ when $c$ is not assumed to be real and $\Re(c)\to-\infty$ (thus obtaining information about the sets $D_m$). In doing so, it might be helpful to assume that $c$ is, in some sense, ``close" to the real axis. Of course, one could also study how $\mathcal N(c)$ changes when $\Re(c)$ is held fixed and $\Im(c)$ varies.      

Zubrilina has proven the somewhat surprising result that $E_4=\emptyset$. For this reason, we make the following definition. 

\begin{definition}
We say a positive integer $m$ is a \emph{Zubrilina number} if $E_m=\emptyset$. We say $m$ is a \emph{strong Zubrilina number} if $D_m=\emptyset$.
\end{definition}

We would like to know if it is possible to give any meaningful description of Zubrilina numbers. In particular, we make the following conjecture.

\begin{conjecture}\label{Conj1}
There are infinitely many Zubrilina numbers. 
\end{conjecture}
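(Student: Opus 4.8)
The plan is to reduce Conjecture~\ref{Conj1} to a statement about the ``jumps'' of the function $r\mapsto\mathcal N(-r)$ on $(0,\infty)$, and then to attack that statement via an explicit combinatorial model of the connected components of $\overline{\sigma_{-r}(\mathbb N)}$. The first, purely formal, step uses Zubrilina's lower bound $\mathcal N(-r)\geq\pi(r)+1$: if $\mathcal N(-r)=m$ then $\pi(r)\leq m-1$, hence $r<p_m$, the $m^{\text{th}}$ prime. Since $\overline{\sigma_c(\mathbb N)}$ is discrete (hence has infinitely many components) for real $c\geq 0$, it follows that $m$ is a Zubrilina number if and only if $\mathcal N(-r)\neq m$ for every $r\in(0,p_m)$, and it suffices to produce infinitely many $m$ with this property.

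Next I would make Sanna's algorithm explicit. For real $r>1$ the set $\overline{\sigma_{-r}(\mathbb N)}$ is a compact subset of $[1,\zeta(r)]$, and I would describe its components as the leaves of a finite rooted tree obtained by repeatedly partitioning a ``cluster'' according to divisibility by $d=2,3,4,5,\dots$, where $d$ splits a given cluster exactly when $d^{-r}$ exceeds the total width contributed by the larger divisors that are still available inside that cluster. This yields a formula $\mathcal N(-r)=1+(\text{number of active splits at parameter }r)$ and shows that $\mathcal N(-r)$ can change only at the countable set of critical $r$ where one of these splitting inequalities degenerates to an equality. A short monotonicity analysis should show that $d^{-r}$ shrinks more slowly than the competing widths (which are governed by divisors exceeding $d$), so that an active split never becomes inactive as $r$ grows; if this is carried out, $r\mapsto\mathcal N(-r)$ is nondecreasing, and then $m$ fails to be a Zubrilina number precisely when $m$ lies in the range of $\mathcal N$, so that Conjecture~\ref{Conj1} becomes the assertion that $\mathcal N(-r)$ makes infinitely many jumps of size at least~$2$ (equivalently, that the range of $\mathcal N$ has unbounded complement).

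A jump of size at least $2$ occurs exactly at a parameter $r^\ast$ where two or more splitting thresholds coincide, or where a single cluster acquires three or more children at once. Zubrilina's theorem that $E_4=\emptyset$ is precisely the observation that one such coincidence occurs at a small value of $r$, where only finitely many divisors are relevant and the configuration can be checked by hand. The task, then, is to exhibit infinitely many such coincidences. I would organize the splitting thresholds as solutions $r$ of equations of the form $d^{-r}=W_{d}(r)$, where $W_d(r)$ is the relevant accumulated width, and look for an infinite family of pairs $(d_1,d_2)$ --- for instance built from a single prime $p$ together with its powers or with $2p,3p,\dots$ --- whose thresholds are forced to coincide, or to coincide in the limit as $p\to\infty$ (which, combined with a continuity/pinching argument, would suffice), because $W_{d_1}$ and $W_{d_2}$ admit a common structural description at that scale.

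The main obstacle, I expect, is exactly this last point: proving that infinitely many threshold coincidences actually occur. Each individual coincidence is a codimension-one condition on a one-dimensional parameter, so one is asking that a countable family of ``non-generic'' events nonetheless happens infinitely often, which must come from genuine number-theoretic structure rather than from a counting heuristic. A secondary obstacle arises if the monotonicity of $\mathcal N(-r)$ cannot be established: then one must separately rule out that $\mathcal N(-r)$ ever drops back down to a value it has jumped over --- for smaller $r$, where the tree is coarser, and for larger $r$, where $\mathcal N(-r)\geq\pi(r)+1$ eventually forces it above the target but leaves a window in which it could oscillate --- so that a quantitative ``no large downward fluctuation'' estimate for $\mathcal N(-r)$ would be needed. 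Zubrilina could bypass both difficulties for the single value $4$ because the relevant $r$ is small enough that the whole picture is finite; an infinitude of Zubrilina numbers seems to require a uniform mechanism in place of that finite check.
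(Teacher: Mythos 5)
This statement is labeled a \emph{conjecture} in the paper and remains open; the author offers no proof of it, so there is nothing in the paper to compare your argument against. What you have written is a research strategy rather than a proof, and you yourself flag the two places where it is incomplete. Both are genuine, load-bearing gaps, not technicalities.

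First, you do not establish monotonicity of $r\mapsto\mathcal N(-r)$, and without it your reduction breaks: if $\mathcal N(-r)$ can decrease, then ``$m$ is never attained on $(0,p_m)$'' is no longer equivalent to ``$m$ is skipped by an increasing step function,'' so showing that some jump of size $\ge 2$ occurs at a parameter $r^\ast$ would not rule out $\mathcal N$ passing through the skipped value somewhere else in $(0,p_m)$. Your monotonicity heuristic (that $d^{-r}$ decays more slowly than the accumulated widths) is exactly the kind of claim that needs a proof, and it is not obvious: the widths in Sanna's picture are themselves built out of terms $d'^{-r}$ for various $d'>d$, and a priori the balance could tip either way as $r$ grows. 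Nothing in the paper, in Sanna's paper, or in Zubrilina's paper asserts monotonicity.

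Second, even granting monotonicity, the heart of the matter --- producing infinitely many parameters at which a jump of size at least $2$ occurs --- is left as a hope. Each such event is, as you say, a codimension-one coincidence on a one-dimensional parameter, so it cannot happen ``generically'' and would need to be forced by an arithmetic mechanism you have not identified. Zubrilina's proof that $E_4=\emptyset$ is a single finite verification at small $r$ and does not provide a template that scales. Your paragraph about pairs $(d_1,d_2)$ with coinciding thresholds ``in the limit as $p\to\infty$'' would, if the limiting coincidence is only asymptotic, not produce an actual skipped value without a further pinching argument that you also have not supplied. In short, the reduction in your first paragraph is correct and the modeling in the second paragraph is plausible, but the conjecture is exactly as open after your sketch as before it, and the paper itself explicitly presents it as unresolved.
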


This article focuses on general complex divisor functions, so we make the following conjecture involving strong Zubrilina numbers. 

\begin{conjecture}\label{Conj2}
There are infinitely many strong Zubrilina numbers. 
\end{conjecture}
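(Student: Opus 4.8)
The plan is to pass to logarithmic coordinates and build the required approximations greedily. By taking complex conjugates we may assume $b>0$. Since $B_{2y^a}(1)\subseteq\exp(B_{3y^a}(0))$ once $y$ is large, it suffices to prove $B_{3y^a}(0)\subseteq\overline{\log(\sigma_c(SF_{>y}))}$; and for a squarefree $n=q_1\cdots q_r$ one has $\log\sigma_c(n)=\sum_i\ell_c(q_i)$. So, given a target $z_0$ with $|z_0|<3y^a$, I want to approximate it arbitrarily well by sums $\sum\ell_c(q)$ over \emph{distinct} primes $q>y$. I would construct a sequence $z_0,z_1,z_2,\dots$ inside $B_{3y^a}(0)$ with $z_{k+1}=z_k-\sum_{n=1}^{N_k}\ell_c(q_k^{(n)})$, where the primes $q_k^{(n)}$ are all distinct and exceed $y$, and such that $z_k\to 0$; then $z_0-z_m=\log\sigma_c\!\left(\prod_{k<m}\prod_{n}q_k^{(n)}\right)\in\log(\sigma_c(SF_{>y}))$, whence $z_0\in\overline{\log(\sigma_c(SF_{>y}))}$.

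The quantity that forces $z_k\to 0$ should be $\theta(w(z_k))$, the total winding of the point $w(z_k)$ where $\mathcal S_c$ last meets the ray $[0,z_k)$ before reaching $z_k$. By Lemma \ref{Lem4} the magnitudes along $\mathcal S_c$ contract by a factor bounded away from $1$ per revolution, and $|z_k|$ is comparable to $|w(z_k)|$ once $z_k$ is small, so it is enough to guarantee $\theta(w(z_{k+1}))>\theta(w(z_k))+\psi_0/2$ at each step, with $\psi_0<\pi/2$ from Lemma \ref{Lem3}. To accomplish one step, fix a large integer $j$, set $u=e^{\psi_0/(3b)}$, and choose the scale $t_k$ by $\ell_c(ut_k)=w^j(z_k)$; then, using $\ell_c(t)=t^c+O(t^{2c})$, every prime $q\in[t_k,ut_k]$ has $\ell_c(q)$ equal to $w^j(e^{-i\psi}\ell_c(t))$ for a suitable small $\psi$ and suitable $t\geq\tau_0$, so by Lemma \ref{Lem3} subtracting $\ell_c(q)$ from the current point does not decrease $\theta(w(\cdot))$. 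Peel off primes $q_k^{(1)},q_k^{(2)},\dots$ from $[t_k,ut_k]$ one at a time: either $\theta(w(\cdot))$ eventually jumps past $\theta(w(z_k))+\psi_0/2$, in which case we stop and set $z_{k+1}$ to the current point, or after $M$ peels we are still within $\psi_0/2$ of where we started. I would pick $M$ (depending on $j$, $\chi_1$, $\psi_0$) large enough to rule out the second alternative: each $\ell_c(q_k^{(n)})$ has magnitude $\geq\chi_1^j|z_k|$ and makes angle $<\psi_0/2$ with $z_k$, so after $M$ of them the component of $z_k^{(M)}$ in the direction of $z_k$ is negative, forcing $z_k^{(M)}$ into the half-plane opposite $z_k$ --- incompatible with $|\arg(z_k^{(M)}/z_k)|\leq\psi_0/2<\pi/2$.

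Distinctness and size of the primes then follow from the monotonicity just arranged. Since $\theta(w(z_k))$ grows by more than $\psi_0/2$ per step, the argument-windows $(\theta(w^j(z_k))-\psi_0/2,\theta(w^j(z_k))]$ from which the step-$k$ primes are taken are pairwise disjoint, so primes used at different steps are automatically distinct (within a step we simply choose distinct primes, which exist because $p_{m+1}/p_m\to1$ yields at least $M$ primes in $[t_k,ut_k]$ once $t_k$ is large). For the lower bound $q_k^{(n)}>y$: choosing $j$ so that $\chi_2^{j-1}\leq u^a/4$ gives $|w^j(z_k)|\leq\chi_2^{j-1}|z_k|<\tfrac34 u^ay^a<|\ell_c(uy)|$, and since $|w^j(z_k)|=|\ell_c(ut_k)|$ with $|\ell_c|$ decreasing, this forces $t_k>y$. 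At the very end I would take $y$ large enough that $3y^a$ is small enough for every $\xi\in B_{3y^a}(0)$ to satisfy $w(\xi)=w^3(\ell_c(x))$ with $x$ beyond all the relevant thresholds (so that Lemma \ref{Lem3} really does apply at each step), and so that $\ell_c(y)\in B_{3y^a}(0)$.

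The step I expect to be the main obstacle is excluding the ``no angular progress'' alternative --- this is where the quantitative choice of $M$, together with the inward-component computation, has to be pinned down --- and, more broadly, ordering the hierarchy of constants $T_c,\tau_0,\psi_0,u,j,M,y$ so that each estimate is available exactly when it is needed. Once that scaffolding is in place, everything else is routine bookkeeping layered on top of the geometric input of Lemmas \ref{Lem3} and \ref{Lem4}; indeed the genuinely delicate fact --- that nudging a point inward along its own radius by subtracting a much smaller, nearly parallel spiral vector never unwinds the nearest inner spiral crossing --- is exactly the content of Lemma \ref{Lem3}, which we are free to assume.
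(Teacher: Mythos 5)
There is a fundamental mismatch between what you were asked to prove and what you actually argued. The statement is Conjecture~\ref{Conj2}, which asserts that there are infinitely many positive integers $m$ with $D_m=\emptyset$, i.e.\ infinitely many values $m$ that are never realized as the number of connected components $\mathcal N(c)$ of $\overline{\sigma_c(\mathbb N)}$ for any complex $c$. That is a statement about the global behavior of the component-count $\mathcal N(\cdot)$ as $c$ ranges over $\mathbb C$, and the paper does \emph{not} prove it --- it is posed as an open problem in Section~4, with the only supporting evidence being Zubrilina's theorem that $E_4=\emptyset$.

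What you have written is instead a (faithful and essentially correct) reconstruction of the proof of Theorem~\ref{Thm4}, namely that $B_{2y^a}(1)\subseteq\overline{\sigma_c(SF_{>y})}$ for sufficiently large $y$ when $\Re(c)<-1$ and $c\not\in\mathbb R$: passing to logarithms, building the sequence $z_k$ via primes in $[t_k,ut_k]$, ruling out alternative (B) by the inward-component estimate, and so on. None of this says anything about which integers occur as values of $\mathcal N(c)$. To attack Conjecture~\ref{Conj2} one would need an entirely different kind of argument --- for instance, a quantitative understanding of how $\mathcal N(c)$ jumps as $c$ moves, sharp enough to show that entire arithmetic progressions or density-positive sets of integers are skipped --- and nothing in your proposal (or in the paper) supplies such a tool. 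So the gap is total: the argument, while sound for the result it actually establishes, does not bear on the conjectured statement.
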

   
Note that Conjecture \ref{Conj2} implies Conjecture \ref{Conj1} because every strong \\ Zubrilina number is a Zubrilina number.  

\section{Acknowledgments}
Recall that Sanna answered Question \ref{Quest1} affirmatively under the assumption that $c$ is real. For two years, the current author naively assumed that the answer to that question is ``yes" in general and that a proof of this fact would require little more than a mild modification of Sanna's argument. It was only after speaking with Nina Zubrilina at the 2017 Duluth Mathematics REU Program that he realized that answering Question \ref{Quest1} is more difficult than he originally thought. Further discussions with participants in the REU program persuaded him to investigate this question more seriously. 

I would like to thank Nina Zubrilina, whose skepticism led me to question my most sacred beliefs about complex divisor functions. I thank Evan Chen, Benjamin Gunby, Andrew Kwon, and Mehtaab Sawhney for helpful discussions and for taking interest in the problem of showing that $\overline{\sigma_c(\mathbb N)}$ has nonempty interior. Finally, I thank Steven J. Miller and the anonymous referee for making helpful comments toward the presentation of this paper.

\end{document}